\documentclass[11pt,a4paper,reqno]{amsart}
\usepackage[T1]{fontenc}
\usepackage{lmodern}
\usepackage[margin=27mm]{geometry}
\usepackage{amsmath,amssymb,amsthm,mathtools,booktabs,microtype}
\usepackage{xcolor}
\usepackage[colorlinks=true,linkcolor=blue!50!black,citecolor=blue!50!black,urlcolor=blue!50!black]{hyperref}
\hypersetup{pdftitle={Fourth-order fusion asymptotics for Sine beta correlation functions},pdfauthor={Weiyang Fang}}
\numberwithin{equation}{section}
\newtheorem{theorem}{Theorem}[section]
\newtheorem{proposition}[theorem]{Proposition}
\newtheorem{lemma}[theorem]{Lemma}
\newtheorem{corollary}[theorem]{Corollary}
\theoremstyle{definition}

\theoremstyle{remark}
\newtheorem{remark}[theorem]{Remark}
\newcommand{\E}{\mathbb E}

\newcommand{\R}{\mathbb R}
\newcommand{\C}{\mathbb C}
\newcommand{\Sine}{\mathsf{Sine}}
\newcommand{\HP}{\mathsf{HP}}

\newcommand{\dd}{\,\mathrm d}
\DeclareMathOperator{\PV}{PV}
\DeclareMathOperator{\sinc}{sinc}
\allowdisplaybreaks[2]
\title[Fourth-order fusion for Sine beta]{Fourth-order fusion asymptotics for\texorpdfstring{\ $\Sine_\beta$}{ Sine beta} correlation functions}
\author{Weiyang Fang}
\address{Independent Researcher, Tokyo, Japan}
\email{weiyang.fang@hotmail.com}
\date{September 10, 2026}
\subjclass[2020]{60B20, 60G55, 82B21}
\keywords{Sine beta process, fusion asymptotics, circular Jacobi ensemble, Hua--Pickrell process, Ward identities, inverse moments}
\begin{document}
\begin{abstract}
We compute the fourth-order correction to the full-collision asymptotics of the correlation functions of the $\Sine_\beta$ process. For $m\ge2$ and $m\beta>3$, the normalized correlation has an expansion through order $\varepsilon^4$, with an explicit rational coefficient depending on the centered profile only through its fourth power sum and the square of its second power sum. The remainder is $o(\varepsilon^4)$, locally uniformly in the collision profile. We evaluate the required fourth inverse moments of the Hua--Pickrell environment by finite-dimensional Ward identities and prove their convergence using characteristic-polynomial derivative bounds. A fourth-order expectation--Taylor lemma handles the full range $m\beta>3$ without requiring fourth moments of every analytic derivative. For general unitary ensembles with a $C^4$ confining potential and a regular bulk point, we prove convergence of the finite-particle fusion coefficients through fourth order and a joint second-order limit, using complex kernel universality and divided differences. This unitary result imposes no fused-environment hypotheses. For arbitrary beta, we retain a conditional quadratic transfer criterion.
\end{abstract}
\maketitle

\section{Introduction and main results}
The short-distance behavior of a point process records both the repulsion between the points being observed and the response of the surrounding particles. For the $\Sine_\beta$ process, the leading full-collision law is a Vandermonde power. Once that factor is removed, the next coefficients describe the interaction between the collision profile and the fused environment.

We use the normalization in which $\Sine_\beta$ has intensity $1/(2\pi)$. The stochastic-zeta representation of Assiotis and Najnudel \cite{AN} expresses its $m$-point correlation as
\begin{equation}\label{eq:correlation-representation}
 \rho_\beta^{(m)}(x_1,\ldots,x_m)
 =C_\beta^{(m)}\prod_{i<j}|x_i-x_j|^\beta\,
 \E\prod_{j=2}^m|\xi^{\beta,m\beta/2}(x_j-x_1)|^\beta.
\end{equation}
Here $\xi^{\beta,\delta}$ is the normalized Hua--Pickrell stochastic zeta function, with $\xi^{\beta,\delta}(0)=1$. The leading constant can be written, with $h=\beta/2$, as
\begin{equation}\label{eq:leading-constant}
 C_\beta^{(m)}=
 \frac{h^{hm(m-1)}\Gamma(1+h)^m}{(2\pi)^m}
 \frac{\prod_{j=0}^{m-1}\Gamma(1+jh)}{\prod_{j=m}^{2m-1}\Gamma(1+jh)}.
\end{equation}
This normalization also follows from the circular Selberg integral; see \cite{Forrester,FangCritical}.

For a pairwise distinct real profile $a=(a_1,\ldots,a_m)$, set
\begin{equation}\label{eq:profile}
 \bar a=\frac1m\sum_i a_i,\qquad b_i=a_i-\bar a,\qquad
 p_r(a)=\sum_i b_i^r,\qquad V(a)=\sum_{i<j}(a_i-a_j)^2=mp_2(a),
\end{equation}
and define
\begin{equation}\label{eq:normalized-correlation}
 R_{\beta,m}(\varepsilon;a)=
 \frac{\rho_\beta^{(m)}(\varepsilon a_1,\ldots,\varepsilon a_m)}
 {C_\beta^{(m)}|\varepsilon|^{\beta\binom m2}\prod_{i<j}|a_i-a_j|^\beta}.
\end{equation}
The second-order expansion established in \cite{FangSecond} is
\begin{equation}\label{eq:second-order}
 R_{\beta,m}(\varepsilon;a)
 =1-\frac{\beta^2V(a)}{8(m\beta-1)(2m+1)}\varepsilon^2+o(\varepsilon^2),
 \qquad m\beta>1.
\end{equation}
The companion manuscript \cite{FangCritical} treats the critical and subcritical ranges $m\beta\le1$. The present paper addresses the next regular coefficient. The new moment condition is $m\beta>3$.

Put
\begin{align}
 \mathcal D_{\beta,m}
 &=(m\beta-1)(m\beta-3)(2m+1)(2m+3)\bigl((2m+1)\beta-2\bigr),\label{eq:denominator}\\
 \mathcal T_{\beta,m}&=m(2m+3)\beta-6(m+1).\label{eq:numerator}
\end{align}
\begin{theorem}[Fourth-order fusion]\label{thm:main}
Let $m\ge2$, $\beta>0$, and $m\beta>3$. Then
\begin{equation}\label{eq:main-expansion}
 R_{\beta,m}(\varepsilon;a)
 =1-\frac{\beta^2V(a)}{8(m\beta-1)(2m+1)}\varepsilon^2
 +c_{4,\beta,m}(a)\varepsilon^4+o(\varepsilon^4),
\end{equation}
where
\begin{equation}\label{eq:main-coefficient}
 c_{4,\beta,m}(a)=
 \frac{\beta^4m}{128\mathcal D_{\beta,m}}
 \left\{\mathcal T_{\beta,m}p_2(a)^2-2m p_4(a)\right\}.
\end{equation}
The remainder is uniform for $a$ in compact sets of pairwise distinct profiles. Through the expectation in \eqref{eq:correlation-representation}, the normalized expression extends to all profiles, and the same remainder statement holds on arbitrary compact subsets of $\R^m$ for that extension.
\end{theorem}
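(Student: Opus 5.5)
The plan starts from the representation \eqref{eq:correlation-representation}. After the translation $x_j-x_1=\varepsilon(a_j-a_1)$, it gives
\[
R_{\beta,m}(\varepsilon;a)=\E\prod_{j=2}^m|\xi(\varepsilon(a_j-a_1))|^\beta ,
\]
where $\xi=\xi^{\beta,m\beta/2}$. We expand $F(t)=\log|\xi(t)|^\beta=\beta\,\mathrm{Re}\log\xi(t)$ near $t=0$. Write $\log\xi(t)=\sum_{k\ge1}\kappa_k t^k/k$, where the random coefficients $\kappa_k$ are power sums $-\sum_n 1/(\lambda_n-\cdot)^k$-type sums over the Hua--Pickrell environment points (after the appropriate principal-value and normalization conventions). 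Expanding $\E\exp\bigl(\sum_j F(\varepsilon(a_j-a_1))\bigr)$ through order $\varepsilon^4$ produces polynomials in the differences $a_j-a_1$. These should be symmetric once the translation invariance of $\Sine_\beta$ is used, so that they can be rewritten in terms of the centered variables $b_i$. The coefficients are joint moments of $\mathrm{Re}\,\kappa_1,\ldots,\mathrm{Re}\,\kappa_4$ and $\mathrm{Im}$ parts of the same total degree, namely $\E[\kappa_4]$, $\E[\kappa_1\kappa_3]$, $\E[\kappa_2^2]$, $\E[\kappa_1^2\kappa_2]$ and $\E[\kappa_1^4]$. The $\varepsilon^2$ term reproduces \eqref{eq:second-order}, and odd orders vanish by the reflection symmetry $\xi(t)\mapsto\overline{\xi(-t)}$ of the environment.

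The steps are as follows.

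First, we compute the finite-$N$ analogues of these fourth-order inverse moments for the circular Jacobi / Hua--Pickrell ensemble with parameter $\delta=m\beta/2$. We use Ward identities, that is, integration by parts in the eigenangles against the Jacobi weight $|1-e^{i\theta}|^{2\delta}$. Applying the Ward identity to observables $\sum_n \cot^k(\theta_n/2)$ and their products gives a closed triangular linear system. It expresses the degree-four moments through lower ones, yielding explicit rational functions of $N,\beta,\delta$. The factors $(m\beta-1)(m\beta-3)$ appear exactly as the conditions $2\delta>1$ and $2\delta>3$ for integrability of $|\theta|^{-4}$ against $|\theta|^{2\delta}$ near the origin. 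The factors $(2m+1)(2m+3)((2m+1)\beta-2)$ come from the resulting linear denominators.

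Second, we pass $N\to\infty$ in the rescaled moments. Convergence of inverse power sums to $\Sine$-scale quantities requires uniform integrability. For that, we bound moments of derivatives of the characteristic polynomial, $\E|\Phi_N^{(k)}(0)/\Phi_N(0)|^{p}$, uniformly in $N$, which is available because the weight vanishes at the origin to order $2\delta>3$.

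Third, we justify exchanging the expectation with the Taylor expansion. We plan to use a fourth-order expectation--Taylor lemma:
\[
\E\, e^{G(\varepsilon)}=\sum_{k\le4}\varepsilon^k\frac{d^k}{d\varepsilon^k}\Big|_0 \E\, e^{G}\big/k!+o(\varepsilon^4),
\]
provided $\sup_{|s|\le\varepsilon}|G''''(s)|e^{G(s)}$ is uniformly integrable and the lower derivatives have enough moments. These conditions are to be checked using only the moments guaranteed by $m\beta>3$. A Taylor remainder with integral form, combined with Hölder's inequality between $e^{G}$ (a positive power of $|\xi|$, which has moments of all small orders) and $G^{(4)}$, which has moments only of order slightly above one, should suffice. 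Uniformity in $a$ over compacts follows since all bounds depend on $\max|a_j-a_1|$ only. The extension to colliding profiles holds because the expectation side is continuous in $a$.

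Finally, we assemble the coefficients, rewrite power sums of $a_j-a_1$ in terms of $p_2$ and $p_4$ (where $p_3$-terms cancel by symmetry), and simplify to $\mathcal T_{\beta,m}p_2^2-2mp_4$ over $128\mathcal D_{\beta,m}$. The main obstacle is the third step. Under only $m\beta>3$, the fourth-order quantity $\kappa_4$, and the products $\kappa_1\kappa_3$ and $\kappa_1^4$, are barely integrable, and not every analytic derivative has a fourth moment. The lemma must therefore be organized so that each term needs only first moments of degree-four monomials, weighted by $|\xi|^\beta$. I would try first to absorb the weight by a change of measure: $|\xi|^\beta$ tilts the environment to one with $\delta$ shifted. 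Under the tilted law, each required monomial is integrable by the derivative bounds.
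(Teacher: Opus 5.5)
Your Steps 1 and 2 and the final assembly of coefficients are essentially the paper's argument: Ward identities for the Cayley variables $\cot(\theta_j/2)$, uniform integrability from the bounds on $f_N^{(k)}(0)$, then substitution of the five moments. Those parts are sound.

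\textbf{The gap is Step 3.} You Taylor-expand $s\mapsto e^{G(s)}$ pathwise, with $G(s)=\beta\sum_j\log|\xi(sv_j)|$. You then control the remainder by $\sup_{|s|\le\varepsilon}|G''''(s)|e^{G(s)}$ and by H\"older between $e^{G}$ and $G''''$. This fails.
\begin{itemize}
\item $G$ has logarithmic singularities at the zeros of $\xi$. With positive probability (of order $\varepsilon^{m\beta+1}$) some zero $x$ satisfies $|x|\le\varepsilon\max_j|v_j|$. Near the corresponding $s_0$ one has $|G''''(s)|e^{G(s)}\asymp|s-s_0|^{\beta-4}$. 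So your supremum is $+\infty$ on that event whenever $\beta<4$; this includes $\beta=2$ and, for $m=2$, the whole range $3/2<\beta<4$.
\item At a fixed intermediate point $s\ne0$ one has $G''''(s)=-6\beta\sum_j v_j^4\sum_x(x-sv_j)^{-4}$. The environment has positive density near $sv_j$, so $G''''(s)$ has no moment of order $\ge1/4$. The moments of order slightly above one that you have in mind exist only at $s=0$, where the points are repelled like $|x|^{m\beta}$.
\item The proposed change of measure does not rescue this. Tilting by $\prod_{j\ge2}|\xi(\varepsilon v_j)|^\beta$ does not shift $\delta$. It lowers the singularity at the origin to strength $\beta$ and inserts new singularities of strength $\beta$ at the points $\varepsilon v_j$. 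That is, it reproduces the $\varepsilon$-dependent $m$-point environment whose inverse moments are exactly what is unknown.
\end{itemize}

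\textbf{The missing idea} is to Taylor-expand in the analytic increments rather than in $\log|\xi|$.
\begin{itemize}
\item Put $U_j=\xi(tv_j)-1$. Pathwise, $U_j=tv_jX+\tfrac12t^2v_j^2Y+\tfrac16t^3v_j^3Z+\tfrac1{24}t^4v_j^4W+O(|t|^5M)$. Here $X=\xi'(0)$, $Y,Z,W$ are the next derivatives at $0$, and $M=\sup_{|z|\le R}|\xi^{(5)}(z)|$.
\item Expand $F(u)=\prod_j|1+u_j|^\beta$. It is $C^4$ near $u=0$ and obeys $|F(u)|\le C(1+\|u\|^{D})$ with $D=(m-1)\beta<m\beta$.
\item Truncate on $G_s=\{s|X|\le s^\theta,\ s^2B\le s^\theta\}$, where $B=|Y|+|Z|+|W|+M$. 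On $G_s$ the fourth-order remainder of $F$ is $o(s^4)$ in expectation, even when $B$ has no fourth moment ($m\beta\le4$).
\item The complement of $G_s$ contains every realisation with a zero at some $tv_j$, since there $\|U\|\ge1$. On it, the growth bound gives $o(s^4)$ using the different moment ranges: $p<m\beta+1$ for $X$ and $q<m\beta$ for the other variables. This requires $p>\max\{4,D\}$ and $q>\max\{3,D\}$, which can be chosen precisely because $m\beta>3$.
\end{itemize}
Without such a device, Step 3 does not yield the $o(\varepsilon^4)$ remainder, and hence neither the expansion nor its uniformity in $a$.
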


\begin{corollary}[Two-point formula]\label{cor:pair}
For $\beta>3/2$,
\begin{align}\label{eq:pair}
 \rho_\beta^{(2)}(0,x)=C_\beta^{(2)}|x|^\beta\bigg[
 1-\frac{\beta^2}{40(2\beta-1)}x^2
 +\frac{\beta^4(7\beta-10)}{4480(2\beta-1)(2\beta-3)(5\beta-2)}x^4
 +o(x^4)\bigg].
\end{align}
\end{corollary}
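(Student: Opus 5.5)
The plan is to obtain the corollary by specializing Theorem~\ref{thm:main} to $m=2$ with the fixed profile $a=(0,1)$ and $\varepsilon=x$. With these choices the condition $m\beta>3$ becomes $\beta>3/2$. Also $(\varepsilon a_1,\varepsilon a_2)=(0,x)$, so \eqref{eq:normalized-correlation} gives
\[
\rho_\beta^{(2)}(0,x)=C_\beta^{(2)}|x|^\beta R_{\beta,2}(x;(0,1)),
\]
because $\prod_{i<j}|a_i-a_j|^\beta=1$ and $|\varepsilon|^{\beta\binom22}=|x|^\beta$. The definition of $R_{\beta,m}$ allows $\varepsilon$ of either sign (only $|\varepsilon|$ enters the normalization), so negative $x$ are covered as well. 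Since the profile is a single fixed pairwise-distinct point, the uniformity clause in Theorem~\ref{thm:main} is not needed; the pointwise $o(\varepsilon^4)$ remainder suffices.

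It remains to evaluate the profile quantities in \eqref{eq:profile}. We have $\bar a=1/2$, so $b=(-1/2,1/2)$. This gives $p_2(a)=1/2$, $p_4(a)=1/8$ and $V(a)=mp_2(a)=1$. The second-order term in \eqref{eq:main-expansion} is then
\[
\frac{\beta^2V(a)}{8(m\beta-1)(2m+1)}=\frac{\beta^2}{40(2\beta-1)}.
\]
For the fourth-order term, \eqref{eq:denominator} and \eqref{eq:numerator} with $m=2$ give
\[
\mathcal D_{\beta,2}=35(2\beta-1)(2\beta-3)(5\beta-2),\qquad \mathcal T_{\beta,2}=14\beta-18.
\]
The bracket in \eqref{eq:main-coefficient} is
\[
\mathcal T_{\beta,2}p_2^2-2mp_4=\frac{14\beta-18}{4}-\frac12=\frac{7\beta-10}{2}.
\]
Substituting, and using $128\cdot35=4480$,
\[
c_{4,\beta,2}=\frac{2\beta^4}{128\cdot35\,(2\beta-1)(2\beta-3)(5\beta-2)}\cdot\frac{7\beta-10}{2}
=\frac{\beta^4(7\beta-10)}{4480(2\beta-1)(2\beta-3)(5\beta-2)}.
\]

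Inserting these coefficients into \eqref{eq:main-expansion} and multiplying by $C_\beta^{(2)}|x|^\beta$ yields \eqref{eq:pair}. There is no real obstacle: all the analytic content sits in Theorem~\ref{thm:main}, and the only points needing care are the arithmetic above and the observation that negative $x$ is covered.
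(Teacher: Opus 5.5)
Your proposal is correct and follows the paper's proof exactly: specialize Theorem~\ref{thm:main} to $m=2$, $a=(0,1)$, $\varepsilon=x$, and substitute $p_2=1/2$, $p_4=1/8$. Your arithmetic for $\mathcal D_{\beta,2}$, $\mathcal T_{\beta,2}$ and the resulting coefficients is correct, and your remark that negative $x$ is covered by $|\varepsilon|$ in the normalization is a fair point that the paper leaves implicit.
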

\begin{proof}
For $a=(0,1)$, one has $p_2=1/2$ and $p_4=1/8$. Substitute these values in Theorem~\ref{thm:main}.
\end{proof}

At fourth order, collision geometry generally retains two invariants. For $m=2$ or $3$, the centered identity $p_4=p_2^2/2$ makes them dependent. For larger $m$, their relative size can vary. Thus the fourth coefficient carries shape information beyond the quadratic discriminant in \eqref{eq:second-order}.

The proof has two parts. A finite system of Ward identities evaluates the inverse moments $\E S_4$ and $\E S_2^2$ of the fused environment. A separate expectation--Taylor argument justifies the expansion, retaining the additional moment margin of the anchored representation \eqref{eq:correlation-representation}.

Section~\ref{sec:conditional-universality} also addresses finite-particle ensembles with general external potentials. Theorem~\ref{thm:unitary-universality} proves unconditional convergence of their quadratic and quartic fusion coefficients when $\beta=2$, the potential is $C^4$ and quadratically confining, and the equilibrium density is continuous and positive near the bulk point. The proof applies the complex kernel limit of Levin and Lubinsky~\cite{LL} to divided differences, which remain regular at a full collision. A uniform Taylor remainder also gives a simultaneous second-order limit when the exact finite-particle collision amplitude is used. For arbitrary $\beta$, Theorem~\ref{thm:univ-conditional-transfer} remains a conditional criterion; its fused-density hypotheses are not proved here for general potentials.

\section{Stochastic-zeta inputs and moment bounds}\label{sec:inputs}
Let $\alpha=m\beta$ and $\delta=\alpha/2$. The circular Jacobi ensemble with $N$ environmental particles has density proportional to
\begin{equation}\label{eq:cj-density}
 \prod_{j<k}|e^{i\theta_j}-e^{i\theta_k}|^\beta
 \prod_{j=1}^N|1-e^{i\theta_j}|^\alpha\prod_{j=1}^N\frac{\mathrm d\theta_j}{2\pi}.
\end{equation}
Under this law define
\begin{equation}\label{eq:fN}
 q_N(z)=\prod_{j=1}^N\frac{z-e^{i\theta_j}}{1-e^{i\theta_j}},\qquad
 f_N(z)=e^{-iz/2}q_N(e^{iz/N}).
\end{equation}
The function $f_N$ is entire, real on the real axis, and $f_N(0)=1$. We write $\xi=\xi^{\beta,\alpha/2}$.

\begin{proposition}[Probabilistic inputs]\label{prop:inputs}
The circular Jacobi functions can be coupled so that $f_N\to\xi$ almost surely, locally uniformly on $\C$. For every integer $k\ge0$, $R>0$, and $0<q<\alpha$,
\begin{equation}\label{eq:compact-moments}
 \sup_N\E\sup_{|z|\le R}|f_N^{(k)}(z)|^q<\infty,
 \qquad \E\sup_{|z|\le R}|\xi^{(k)}(z)|^q<\infty.
\end{equation}
For $0<p<\alpha+1$, the slope satisfies
\begin{equation}\label{eq:slope-moments}
 \sup_N\E|f_N'(0)|^p<\infty,\qquad
 \E|f_N'(0)|^p\longrightarrow\E|\xi'(0)|^p<\infty.
\end{equation}
The zero set of $\xi$ is $\HP_{\beta,\alpha/2}$, and its principal-value product is
\begin{equation}\label{eq:pv-product}
 \xi(z)=\lim_{L\to\infty}\prod_{\substack{x\in\HP_{\beta,\alpha/2}\\|x|<L}}(1-z/x).
\end{equation}
\end{proposition}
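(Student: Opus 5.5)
This proposition collects inputs that are largely available in the literature, so I plan to assemble them rather than rederive them. The plan has four steps: coupling, compact moment bounds, slope moments, and identification of the zero set with its principal-value product.

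\textbf{Coupling.} For the almost sure locally uniform convergence $f_N\to\xi$ I will invoke the construction of Assiotis and Najnudel \cite{AN}, which builds the circular Jacobi ensembles of all sizes on one probability space.
\begin{itemize}
\item[(i)] Via the Verblunsky-coefficient (Killip--Nenciu type) realization, the characteristic polynomials $q_N$ become products of independent factors.
\item[(ii)] The normalized functions $f_N(z)=e^{-iz/2}q_N(e^{iz/N})$ form, after the phase normalization, a martingale-type sequence in $L^2$ on compacts.
\item[(iii)] Convergence in $L^2$, locally uniformly, upgrades to almost sure convergence along the coupling. By the Cauchy integral formula, locally uniform convergence of $f_N$ gives locally uniform convergence of every derivative $f_N^{(k)}$.
\end{itemize}
That $f_N$ is real on $\R$ follows from $|1-e^{i\theta}|$-type symmetry: each factor $e^{-iz/(2N)}(e^{iz/N}-e^{i\theta})/(1-e^{i\theta})$ is real for real $z$.

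\textbf{Compact moment bounds.} For \eqref{eq:compact-moments}, I first use Cauchy's formula to reduce the bound on $\sup_{|z|\le R}|f_N^{(k)}|$ to a bound on $\sup_{|z|\le 2R}|f_N|$. I then aim to control $\E|f_N(z)|^q$ uniformly for complex $z$ in a compact set.
\begin{itemize}
\item[(i)] The key input is that $|q_N(w)|^q$ with $|w|$ near $1$ has expectation computable as a ratio of circular Jacobi (Selberg/Morris) normalizations.
\item[(ii)] Dividing by $|1-e^{i\theta_j}|^q$ shifts the Jacobi parameter $\alpha\to\alpha-q$. This is finite precisely when $q<\alpha$, and the Gamma-ratio asymptotics are uniform in $N$.
\item[(iii)] To pass from pointwise bounds to a bound on the supremum, I use subharmonicity of $|f_N|^{q}$: bound the supremum over the disc of radius $2R$ by the average over the disc of radius $3R$, then apply Fubini.
\end{itemize}
The bound for $\xi$ then follows from Fatou's lemma under the coupling.

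\textbf{Slope moments.} The slope bound \eqref{eq:slope-moments} allows the larger range $p<\alpha+1$, so it cannot come from the general bound. Instead I plan to use the logarithmic derivative $f_N'(0)=\sum_j \tfrac{i}{2N}\cot(\theta_j/2)$.
\begin{itemize}
\item[(i)] The singular contribution comes from eigenangles near $0$. The density of the closest angle behaves like $|\theta|^{\alpha}$, so $|\cot(\theta/2)|^p$ is integrable against it exactly when $p<\alpha+1$.
\item[(ii)] Combining this with a rigidity bound for the bulk terms gives the bound uniform in $N$.
\item[(iii)] Convergence of $\E|f_N'(0)|^p$ follows from almost sure convergence together with uniform integrability, which I obtain by applying the same bound at some exponent $p'\in(p,\alpha+1)$.
\end{itemize}

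\textbf{Zero set and product.} The zero set of $\xi$ is, by Hurwitz's theorem, the limit of the rescaled eigenangles $N\theta_j$, which is $\HP_{\beta,\alpha/2}$ by definition. For the principal-value product \eqref{eq:pv-product}, I pass to the limit in the finite products, using that $e^{-iz/2}$ is cancelled by symmetric truncation, with the tail controlled by the $L^2$ convergence of \cite{AN}.

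\textbf{Expected main obstacle.} I expect the slope range $p<\alpha+1$ to be the hardest step, in particular the uniform-in-$N$ control of the near-origin eigenvalue density. My first attempt will be a conditioning argument. I integrate out one angle, use the circular Jacobi Markov structure to bound its conditional density near $0$ by $C|\theta|^{\alpha}N^{\alpha+1}$, and then sum over angles.
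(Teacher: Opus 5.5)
There is a genuine gap in both moment estimates.

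\textbf{Compact moment bounds.} Your coupling, zero-set and product steps defer to \cite{AN}, as the paper does. Your passage from pointwise moments to compact suprema (sub-mean-value property of $|f_N|^q$, Cauchy estimates, Fatou) is the paper's Poisson step. The problem is the pointwise bound you feed into it. Write $D(w)=\prod_j(w-e^{i\theta_j})$, $z=x+iy$, $w=e^{iz/N}$, and let $\E_\beta$ denote the circular $\beta$ ensemble. Then
\[
 \E|f_N(z)|^q=e^{qy/2}\,
 \frac{\E_\beta\bigl[|D(1)|^{\alpha-q}|D(w)|^{q}\bigr]}{\E_\beta|D(1)|^{\alpha}}.
\]
For $w\neq1$ the numerator has two Fisher--Hartwig singularities at distance of order $|z|/N$. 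It is not a ratio of Morris--Selberg normalizations.

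Shifting $\alpha\to\alpha-q$ absorbs only the singularity at $1$. What remains is the circular Jacobi average, with parameter $\alpha-q$, of $|D(w)|^q$. Controlling that uniformly in $N$ is exactly the merging-singularity problem. The Gamma ratio you can compute behaves like $N^{-q(2\alpha-q)/(2\beta)}$, so it gives no uniform bound by itself. It is also finite for every $q<\alpha+1$, so it is not where the restriction $q<\alpha$ comes from.

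A working argument is, for instance, H\"older with exponents $\alpha/(\alpha-q)$ and $\alpha/q$. This bounds the numerator by $(\E_\beta|D(1)|^\alpha)^{1-q/\alpha}(\E_\beta|D(w)|^\alpha)^{q/\alpha}$. One then shows $\E_\beta|D(w)|^\alpha\le\E_\beta|D(1)|^\alpha$ in three cases: by rotation invariance on $|w|=1$; by Jensen applied to the Poisson representation of $\log|D|$ for $|w|<1$; and after the reflection $|D(w)|=|w|^N|D(1/\bar w)|$ for $|w|>1$. This yields $\E|f_N(z)|^q\le e^{q|y|/2}$, which is the estimate the paper takes from the proof of \cite[Proposition 2.10]{AN}. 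The restriction $q<\alpha$ enters only through the H\"older exponents.

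\textbf{Slope moments.} For \eqref{eq:slope-moments} the paper does not argue from eigenvalue densities either. It quotes uniform $L^p$ bounds and $L^p$ convergence, for $p<\alpha+1$, of the normalized Hua--Pickrell trace $\frac1{2N}\sum_j\cot(\theta_j/2)$ from \cite[Propositions 3.8 and 3.11]{AGS}. It then identifies the limit with $-\xi'(0)$ through the coupling.

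In your route the substantive part is left open. Apart from the near-origin terms, $\frac1{2N}\sum_j\cot(\theta_j/2)$ is a conditionally convergent sum whose cancellations occur at all scales between $1/N$ and $1$. An $N$-uniform bound on its $p$-th moment, for $p$ up to $\alpha+1$ (which is unbounded as $\alpha$ grows), is itself a rigidity theorem. You invoke it but neither prove nor cite it.

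The near-origin piece is not delivered by the conditioning scheme as described either. If each angle's density near $0$ is bounded by $CN^{\alpha+1}|\theta|^\alpha$, summing over the $N$ angles gives an intensity of order $N^{\alpha+2}|\theta|^\alpha$. For the rescaled points $N\theta_j$ this is $CN|x|^\alpha$, which is not uniform. What you need is that the one-point intensity of the rescaled points is itself bounded by $C|x|^\alpha$ near the origin, uniformly in $N$.

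A minor slip: $f_N'(0)=-\frac1{2N}\sum_j\cot(\theta_j/2)$ is real, so your factor $i$ is wrong. This is harmless for absolute moments.
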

\begin{proof}
The coupling and product representation are from \cite[Propositions 2.7--2.8]{AN}, based on the circular Jacobi limit in \cite{LV}. The complex-plane estimate in \cite[proof of Proposition 2.10]{AN} gives
\[
 \E|f_N(x+iy)|^q\le C_q e^{q|y|/2},\qquad 0<q<\alpha,
\]
uniformly in $N$. Applying the Poisson inequality to the subharmonic function $|f_N|^q$ on a disk of radius $R+1$ bounds its supremum on the disk of radius $R$ by a constant times its boundary integral. Taking expectations gives the compact bound for $k=0$; Cauchy's estimate on a slightly larger disk gives every fixed $k$. Fatou's lemma gives the corresponding limiting bounds. This consequence is also recorded in \cite[Section 4]{FangSecond}. Finally,
\[
 f_N'(0)=-\frac1{2N}\sum_j\cot(\theta_j/2).
\]
The real absolute-moment convergence for the normalized Hua--Pickrell trace in \cite[Propositions 3.8 and 3.11]{AGS} gives \eqref{eq:slope-moments}. The coupling identifies the limiting trace with $-\xi'(0)$.
\end{proof}

Define
\begin{equation}\label{eq:Sk}
 S_1=\PV\sum_{x\in\HP_{\beta,\alpha/2}}x^{-1},\qquad
 S_k=\sum_{x\in\HP_{\beta,\alpha/2}}x^{-k},\quad k\ge2.
\end{equation}
The sums for $k\ge2$ are absolutely convergent almost surely. In a random neighborhood of zero with no zero of $\xi$,
\begin{equation}\label{eq:log-series}
 \log\xi(z)=-S_1z-\frac{S_2}{2}z^2-\frac{S_3}{3}z^3-\frac{S_4}{4}z^4+O(z^5).
\end{equation}
This identity will be used to identify Taylor coefficients. The interchange of the expansion and expectation is proved separately below.


\section{Finite Ward identities and fourth inverse moments}
\label{sec:ward}

Throughout this section let $h=\beta/2>0$ and $\alpha>3$.
The specialization relevant to fusion is $\alpha=m\beta$.
For $N\geq1$, the Cayley coordinates
$u_j=\cot(\theta_j/2)$ of the circular Jacobi ensemble
$\mathsf{CJ}_{N,\beta,\alpha/2}$ have density
\begin{equation}
 \frac{1}{Z_N^{\mathrm{HP}}}
 \prod_{i<j}|u_i-u_j|^{2h}
 \prod_{i=1}^N(1+u_i^2)^{-s_N},
 \qquad s_N=1+\frac{\alpha}{2}+h(N-1).
 \label{eq:ward-cauchy-density}
\end{equation}
Write $p_j=\sum_{i=1}^N u_i^j$, with $p_0=N$, and
$q_{j,N}=p_j/(2N)^j$ for $j\geq1$.

\begin{lemma}[Polynomial Ward identity]
\label{lem:polynomial-ward}
Let $1\leq r\leq4$ and let $F$ be a symmetric polynomial of total
degree at most $4-r$.  Under \eqref{eq:ward-cauchy-density},
\begin{equation}
 0=\mathbb E\left[
 F\mathcal L_{r,N}
 +\sum_{i=1}^N u_i^{r-1}(1+u_i^2)\partial_i F
 \right],
 \label{eq:polynomial-ward-general}
\end{equation}
where
\begin{align}
 \mathcal L_{1,N}&=-\alpha p_1,\notag\\
 \mathcal L_{2,N}&=(1-h-\alpha)p_2+h p_1^2+K_N,\notag\\
 \mathcal L_{3,N}&=(2-2h-\alpha)p_3+2h p_1p_2+L_Np_1,\notag\\
 \mathcal L_{4,N}&=(3-3h-\alpha)p_4+h(2p_1p_3+p_2^2)
                         +M_Np_2+h p_1^2,
 \label{eq:ward-operators}
\end{align}
and
\begin{equation}
 K_N=hN^2+(1-h)N,\qquad
 L_N=2hN+2-2h,\qquad
 M_N=2hN+3-3h.
 \label{eq:ward-finite-constants}
\end{equation}
\end{lemma}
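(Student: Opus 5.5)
The plan is a direct integration by parts against the Cayley density \eqref{eq:ward-cauchy-density}. Write $P_N$ for the unnormalized density. For each $i$, apply the vector field $u_i^{r-1}(1+u_i^2)F\,\partial_i$. Formally,
\[
0=\sum_{i=1}^N\int_{\R^N}\partial_i\bigl[g_r(u_i)F\,P_N\bigr]\,du,\qquad g_r(u)=u^{r-1}+u^{r+1}.
\]
Expanding the derivative gives three contributions, plus the term $\sum_i g_r(u_i)\partial_iF$ that already appears in \eqref{eq:polynomial-ward-general}. The first contribution comes from $\partial_i$ hitting $g_r$, namely $(r-1)p_{r-2}+(r+1)p_r$. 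The second comes from the confining weight, namely $-2s_N\,g_r(u_i)u_i/(1+u_i^2)=-2s_Nu_i^r$, summing to $-2s_Np_r$. The third comes from the Vandermonde factor: it is $2h\sum_{j\ne i}g_r(u_i)/(u_i-u_j)$, which after symmetrizing in $i\leftrightarrow j$ becomes $h\sum_{i\ne j}\bigl(g_r(u_i)-g_r(u_j)\bigr)/(u_i-u_j)$.

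The algebraic core is the identity
\[
\sum_{i\ne j}\frac{u_i^k-u_j^k}{u_i-u_j}=\sum_{a+b=k-1}p_ap_b-k\,p_{k-1}.
\]
It follows by expanding the divided difference and subtracting the diagonal terms $i=j$. Apply it with $k=r-1$ and $k=r+1$. Then insert $s_N=1+\alpha/2+h(N-1)$ and $p_0=N$, and collect terms for $r=1,2,3,4$ separately. This should reproduce $\mathcal L_{r,N}$ in \eqref{eq:ward-operators} with the constants \eqref{eq:ward-finite-constants}.

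As a check, take $r=1$. The coefficient of $p_1$ is $2-2s_N+h(2N-2)=-\alpha$, as required. For $r=4$, the $p_2$ coefficient collects three pieces: $3p_2$ from $\partial_i g_4$, the contribution $hp_0p_2$ from $k=3$, and the pair-sum terms from $k=5$. I will do this bookkeeping, but it is routine.

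The analytic part is justifying the integration by parts, and this is where $\alpha>3$ enters. The Vandermonde singularity is harmless. The symmetrized interaction term is a polynomial, and $|u_i-u_j|^{2h}$ with $h>0$ is locally absolutely continuous with integrable derivative, so no boundary terms arise on the diagonals. At infinity I would use a smooth cutoff $\chi(u/L)$ on each coordinate rather than boxes.

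For decay, the one-variable dependence of $P_N$ on $u_i$ for large $|u_i|$ is $|u_i|^{2h(N-1)-2s_N}=|u_i|^{-2-\alpha}$, with the other coordinates fixed. The integrand in \eqref{eq:polynomial-ward-general} has degree at most $4$ in any single variable, since $F$ has degree at most $4-r$ and the $\mathcal L_{r,N}$ terms and $g_r\partial_iF$ have degree at most $r$ more. Hence it is integrable exactly when $\alpha>3$.

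The cutoff error is bounded by $L^{-1}\int_{|u_i|\sim L}|g_rF|P_N$. Its one-variable growth is $L^{-1}\cdot L^{5}\cdot L^{-2-\alpha}\cdot L=L^{3-\alpha}\to0$. To make this rigorous, integrate out the remaining variables using the same polynomial integrability, for instance by dominating by the one-point marginal tail with polynomial weights.

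I expect this uniform-in-other-variables tail control to be the main obstacle. If it becomes awkward, the fallback is to bound $|g_r(u_i)F|$ by a sum of monomials and use the symmetric product structure of $P_N$ coordinate by coordinate via Fubini. The algebraic collection of terms is straightforward.
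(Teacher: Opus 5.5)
Your algebra and overall route match the paper's. The vector field $u_i^{r-1}(1+u_i^2)F\,\partial_i$, pairwise symmetrization, the divided-difference identity, and substitution of $s_N$ do give all four $\mathcal L_{r,N}$; your $\sum_{i\ne j}$ version of the identity is twice the paper's $\sum_{i<j}$ version. There is one small slip in your $r=4$ description. The $p_2$ coefficient is $3+h(2N-3)=M_N$, coming only from $\partial_i g_4$ and the $k=3$ term $h[(2N-3)p_2+p_1^2]$; the $k=5$ term contributes only to $p_4$, $p_1p_3$ and $p_2^2$. Handling the diagonals by local absolute continuity of $|u_i-u_j|^{2h}$, instead of the paper's $\eta$-regularization, is also fine. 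You should record that the unsymmetrized terms are bounded by $C_R|u_i-u_j|^{2h-1}$ on the support of the cutoff, which is locally integrable for every $h>0$. That bound is what licenses Fubini before you symmetrize, and it is exactly the assertion the paper's regularization is designed to avoid.

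The genuine gap is at infinity, which is the only place $\alpha>3$ enters. Your decay $|u_i|^{-2-\alpha}$ ``with the other coordinates fixed'' is a pointwise statement. It is not uniform when several coordinates are large together, since then $|u_i-u_j|^{2h}$ is not comparable to $|u_i|^{2h}$. So it does not show that $(1+\sum_i|u_i|)^4P_N$ is integrable on $\R^N$, and both the finiteness of the expectations and the vanishing of the cutoff terms need that. Your fallbacks do not close this: $P_N$ has no product structure to exploit coordinate by coordinate, and bounding by ``the one-point marginal tail'' presupposes the tail estimate in question.

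The missing step is the elementary inequality $|x-y|^2\le2(1+x^2)(1+y^2)$. It gives $\prod_{i<j}|u_i-u_j|^{2h}\le C\prod_i(1+u_i^2)^{h(N-1)}$, and hence $P_N\le C_{N,h}\prod_i(1+u_i^2)^{-1-\alpha/2}$. Each factor has finite absolute moments of every order below $\alpha+1>4$, so every polynomial of degree at most four in each variable is integrable. On $R\le|u_i|\le2R$ one has $|g_r(u_i)|/R\le C(1+|u_i|)^r$, so the cutoff-derivative terms are dominated by $C(1+\sum_i|u_i|)^4\mathbf 1_{\{\max_i|u_i|\ge R\}}P_N$. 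Dominated convergence as $R\to\infty$ then finishes the argument.
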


\begin{proof}
All polynomial moments of total degree at most four are finite.
Indeed, $|x-y|^2\leq2(1+x^2)(1+y^2)$ bounds the unnormalized density
in \eqref{eq:ward-cauchy-density} by
$C_{N,h}\prod_i(1+u_i^2)^{-1-\alpha/2}$.
Each factor has every absolute moment of order strictly below
$\alpha+1$, and $4<\alpha+1$.

Here is a justification of integration by parts that also covers
$0<\beta<1$.  Set $v(x)=x^{r-1}(1+x^2)$.
Choose $\chi\in C_c^1(\mathbb R)$ with $0\leq\chi\leq1$, equal to
one on $[-1,1]$ and zero outside $[-2,2]$, and put
$\chi_R(u)=\prod_i\chi(u_i/R)$.
First replace each Vandermonde factor $|u_i-u_j|^{2h}$ by
$((u_i-u_j)^2+\eta^2)^h$, $\eta>0$.
The integral of
$\sum_i\partial_i\{v(u_i)F\chi_R\,w_{N,\eta}(u)\}$ is zero,
where $w_{N,\eta}$ denotes this smooth unnormalized density.
After summing a pair of logarithmic derivatives, its contribution is
\[
 2h F\chi_R\sum_{i<j}
 \frac{v(u_i)-v(u_j)}{u_i-u_j}
 \frac{(u_i-u_j)^2}{(u_i-u_j)^2+\eta^2}\,w_{N,\eta}(u).
\]
The difference quotient is a polynomial, and the second fraction is
between zero and one.  On the compact support of $\chi_R$, dominated
convergence therefore permits $\eta\downarrow0$ in the summed
identity.  This avoids a separate integrability assertion for the
unsymmetrized singular logarithmic derivatives.

The terms in which a derivative falls on $\chi_R$ have absolute
value bounded by
\[
 C_{F,r,N}(1+\textstyle\sum_i|u_i|)^4
 \mathbf 1_{\{\max_i|u_i|\geq R\}}\,w_N(u).
\]
To see the bound, use $|v(u_i)|/R\leq C(1+|u_i|)^r$ on
$R\leq|u_i|\leq2R$, and $r+\deg F\leq4$.
The preceding moment bound makes this majorant integrable, so the
cutoff-derivative terms vanish as $R\to\infty$.
All other terms, after pairwise symmetrization, are polynomials of
total degree at most four and also pass to the limit by dominated
convergence.  Division by $Z_N^{\mathrm{HP}}$ gives
\[
0=\mathbb E\left[
 F\left\{\sum_i v'(u_i)-2s_Np_r
       +2h\sum_{i<j}\frac{v(u_i)-v(u_j)}{u_i-u_j}\right\}
 +\sum_i v(u_i)\partial_iF\right].
\]
For a monomial of positive degree $k$,
\[
 \sum_{i<j}\frac{u_i^k-u_j^k}{u_i-u_j}
 =\frac12\sum_{\ell=0}^{k-1}
       (p_\ell p_{k-1-\ell}-p_{k-1}).
\]
Apply this identity to the two monomials in $v$; the constant
monomial when $r=1$ contributes zero.  Substituting the value of
$s_N$ yields \eqref{eq:ward-operators}.
\end{proof}

For clarity, we give the exact finite-dimensional system before
taking a limit.  Define
\begin{align*}
 a_N&=\mathbb E q_{2,N},&
 b_N&=\mathbb E q_{1,N}^2,\\
 A_N&=\mathbb E q_{4,N},&
 B_N&=\mathbb E(q_{3,N}q_{1,N}),&
 C_N&=\mathbb E q_{2,N}^2,\\
 D_N&=\mathbb E(q_{2,N}q_{1,N}^2),&
 E_N&=\mathbb E q_{1,N}^4.
\end{align*}
The two choices $(r,F)=(1,p_1),(2,1)$ give
\[
 \alpha b_N=a_N+\frac1{4N},\qquad
 (1-h-\alpha)a_N+h b_N+k_N=0,
 \quad k_N=\frac h4+\frac{1-h}{4N}.
\]
In particular,
\begin{equation}
 a_N=\frac{h\alpha N+\alpha(1-h)+h}
          {4N(\alpha-1)(\alpha+h)},\qquad
 b_N=\frac{hN+\alpha}{4N(\alpha-1)(\alpha+h)}.
 \label{eq:ward-exact-second-moments}
\end{equation}
Put
\[
 \ell_N=\frac{h}{2N}+\frac{1-h}{2N^2},\qquad
 \mu_N=\frac{h}{2N}+\frac{3-3h}{4N^2}.
\]
The choices
\[
 (r,F)=(4,1),(3,p_1),(2,p_2),(2,p_1^2),
                    (1,p_3),(1,p_2p_1),(1,p_1^3)
\]
give, in this order, the following seven exact identities:
\begin{align}
 0={}&(3-3h-\alpha)A_N+h(2B_N+C_N)
                   +\mu_Na_N+\frac{h}{4N^2}b_N,\notag\\
 0={}&A_N+(2-2h-\alpha)B_N+2hD_N
                   +\frac{a_N}{4N^2}+\ell_Nb_N,\notag\\
 0={}&2A_N+(1-h-\alpha)C_N+hD_N
                   +\left(k_N+\frac1{2N^2}\right)a_N,\notag\\
 0={}&2B_N+(1-h-\alpha)D_N+hE_N
                   +\left(k_N+\frac1{2N^2}\right)b_N,\notag\\
 0={}&-\alpha B_N+3A_N+\frac{3a_N}{4N^2},\notag\\
 0={}&-\alpha D_N+2B_N+C_N
                   +\frac{b_N}{2N^2}+\frac{a_N}{4N},\notag\\
 0={}&-\alpha E_N+3D_N+\frac{3b_N}{4N}.
 \label{eq:ward-exact-fourth-system}
\end{align}
Thus all terms omitted from the limiting system below are explicit;
by \eqref{eq:ward-exact-second-moments}, they are $O(N^{-1})$.

\subsection{Uniform integrability and the Hua--Pickrell limit}

Let
\[
 f_N(z)=e^{-iz/2}\prod_{j=1}^N
             \frac{e^{iz/N}-e^{i\theta_j}}{1-e^{i\theta_j}}
       =\prod_{j=1}^N
             \frac{\sin(\theta_j/2-z/(2N))}{\sin(\theta_j/2)}.
\]
Proposition~\ref{prop:inputs} supplies the locally uniform almost-sure coupling and the moment bounds
\begin{align}
 \sup_N\E|f_N'(0)|^p&<\infty &&(0<p<\alpha+1),\label{eq:ward-trace-input}\\
 \sup_N\E|f_N''(0)|^q&<\infty &&(0<q<\alpha).\label{eq:ward-derivative-input}
\end{align}
Only these exponent ranges are used in the passage to the limit.

For the zeros of $\xi=\xi^{\beta,\alpha/2}$, put
\[
 S_1=\operatorname{PV}\sum_x x^{-1},\qquad
 S_k=\sum_x x^{-k}\quad(k\geq2).
\]
The sums for $k\geq2$ converge absolutely almost surely: the
canonical product has $\sum_x x^{-2}<\infty$, and its zeros cannot
accumulate at zero since $\xi(0)=1$.
In a neighborhood of the origin,
\[
 (\log\xi)'(0)=-S_1,\qquad
 (\log\xi)^{(k)}(0)=-(k-1)!S_k\quad(k\geq2).
\]
The trigonometric product for $f_N$ gives the exact relations
\begin{align}
 (\log f_N)'(0)&=-q_{1,N},\notag\\
 (\log f_N)''(0)&=-q_{2,N}-\frac1{4N},\notag\\
 (\log f_N)'''(0)&=-2q_{3,N}-\frac{q_{1,N}}{2N^2},\notag\\
 (\log f_N)''''(0)&=-6q_{4,N}
                         -\frac{2q_{2,N}}{N^2}-\frac1{8N^3}.
 \label{eq:ward-log-derivatives}
\end{align}
No global logarithm is required in these identities: since
$f_N(0)=\xi(0)=1$, each derivative is a polynomial in the ordinary
derivatives at zero.  Local uniform convergence of the entire
functions and \eqref{eq:ward-log-derivatives} imply
\begin{equation}
 q_{k,N}\longrightarrow S_k\quad\hbox{almost surely},
                   \qquad 1\leq k\leq4.
 \label{eq:ward-almost-sure-limit}
\end{equation}

\begin{lemma}[Convergence of the fourth-order mixed moments]
\label{lem:ward-fourth-ui}
Under $\alpha>3$, the seven expectations defining
$a_N,b_N,A_N,B_N,C_N,D_N,E_N$ converge to the expectations of the
corresponding products of $S_1,S_2,S_3,S_4$.
\end{lemma}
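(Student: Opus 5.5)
The plan is to reduce each of the seven random variables to a polynomial in the Taylor coefficients $f_N'(0),f_N''(0),f_N'''(0),f_N''''(0)$. We then show that this polynomial is bounded in $L^{1+\eta}$ uniformly in $N$ for some $\eta>0$, and conclude with Vitali's theorem and the almost-sure convergence \eqref{eq:ward-almost-sure-limit}.

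\textbf{Step 1 (algebraic reduction).} Since $f_N(0)=1$, the identities \eqref{eq:ward-log-derivatives} express each $q_{k,N}$ as a polynomial in the derivatives $f_N^{(j)}(0)$, $j\le k$, plus explicit $O(N^{-1})$ corrections. For instance,
\[
 q_{1,N}=-f_N'(0),\qquad q_{2,N}=f_N'(0)^2-f_N''(0)-\tfrac1{4N}.
\]
The formulas for $q_{3,N}$ and $q_{4,N}$ follow from the Faà di Bruno expansion of $(\log f)^{(k)}(0)$. Assign weight $j$ to $f_N^{(j)}(0)$. Then $q_{k,N}$ is a combination of monomials of weight at most $k$, with coefficients bounded uniformly in $N$. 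Each of the seven products $q_{4,N}$, $q_{3,N}q_{1,N}$, $q_{2,N}^2$, $q_{2,N}q_{1,N}^2$, $q_{1,N}^4$, $q_{2,N}$, $q_{1,N}^2$ is therefore a bounded-coefficient combination of monomials
\[
 f_N'(0)^{a}\,f_N''(0)^{b}\,f_N'''(0)^{c}\,f_N''''(0)^{d},
 \qquad a+2b+3c+4d\le4 .
\]

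\textbf{Step 2 (uniform $L^{1+\eta}$ bounds by Hölder).} For such a monomial, apply Hölder's inequality. Give the factor $f_N'(0)$ an exponent just below $\alpha+1$, using \eqref{eq:ward-trace-input}. Give each higher derivative an exponent just below $\alpha$, using \eqref{eq:compact-moments} with $k=2,3,4$ (at $z=0$), or \eqref{eq:ward-derivative-input}. The Hölder condition for $L^{1+\eta}$ is
\[
 (1+\eta)\Big(\frac{a}{\alpha+1}+\frac{b+c+d}{\alpha}\Big)<1 .
\]
The extremal weight-four cases are as follows.
\begin{itemize}
\item $f_N''''$: the sum is $1/\alpha$.
\item $f_N'f_N'''$: the sum is $1/(\alpha+1)+1/\alpha$.
\item $(f_N'')^2$: the sum is $2/\alpha$.
\item $(f_N')^2f_N''$: the sum is $2/(\alpha+1)+1/\alpha$, which is below $5/6$ once $\alpha\geq3$.
\item $(f_N')^4$: the sum is $4/(\alpha+1)$.
\end{itemize}
All of these are strictly less than $1$ exactly when $\alpha>3$. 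Lower-weight monomials are easier. Hence some $\eta=\eta(\alpha)>0$ works for all monomials simultaneously, and $\sup_N\E|\cdot|^{1+\eta}<\infty$ for each of the seven products.

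\textbf{Step 3 (passage to the limit).} By \eqref{eq:ward-almost-sure-limit}, each product converges almost surely to the corresponding product of $S_1,\dots,S_4$. Uniform integrability from Step 2 and Vitali's theorem then give convergence of the expectations. Fatou's lemma, or the limiting bounds in Proposition~\ref{prop:inputs}, shows that the limits are integrable.

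\textbf{Main obstacle.} The delicate term is $q_{1,N}^4=f_N'(0)^4$, together with the $(f_N')^4$ pieces of $q_{4,N}$ and $q_{2,N}^2$. These need fourth moments of the slope with some room to spare. This is available only through the improved slope range $p<\alpha+1$ in \eqref{eq:slope-moments}, not the generic range $q<\alpha$ for derivatives. So I would check carefully that every monomial containing $f_N'(0)^4$ is handled by \eqref{eq:ward-trace-input} alone, without Hölder-mixing it with a higher derivative.
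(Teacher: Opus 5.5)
Your argument is correct, but it takes a different route from the paper's. You expand $q_{3,N}$ and $q_{4,N}$ by Fa\`a di Bruno into monomials in $f_N'(0),\dots,f_N''''(0)$. You then bound each monomial by H\"older, using the compact-moment bounds \eqref{eq:compact-moments} for $k=2,3,4$. Your weight bookkeeping is right. Moreover, $a+2b+3c+4d\le4$ forces $b=c=d=0$ when $a=4$, so the ``main obstacle'' you flag resolves itself.

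The paper never uses $f_N'''(0)$ or $f_N''''(0)$. It combines \eqref{eq:ward-q2-ui-identity} with $q_{2,N}\ge0$ to get $\sup_N\E q_{2,N}^{2+\eta}<\infty$ from the slope and second-derivative bounds alone. It then applies the power-sum inequalities $0\le q_{4,N}\le q_{2,N}^2$ and $|q_{3,N}|\le q_{2,N}^{3/2}$, which hold because the Cayley coordinates $u_i$ are real. These dominate $q_{4,N}$ directly, and $q_{3,N}q_{1,N}$ and $q_{2,N}q_{1,N}^2$ after H\"older.

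The paper's route is more economical in probabilistic input: it exploits the positivity structure of the ensemble, consistent with its remark that only the ranges in \eqref{eq:ward-trace-input}--\eqref{eq:ward-derivative-input} are used. Yours is more mechanical and does not use that structure, at the cost of relying on higher-derivative moment bounds. Proposition~\ref{prop:inputs} supplies those bounds for every $k$, so nothing is missing. Both arguments hinge on the same binding constraint, $4/(\alpha+1)<1$ for $q_{1,N}^4$.
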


\begin{proof}
The second identity of \eqref{eq:ward-log-derivatives} reads
\begin{equation}
 q_{2,N}=f_N'(0)^2-f_N''(0)-\frac1{4N}.
 \label{eq:ward-q2-ui-identity}
\end{equation}
Choose $\eta>0$ so small that
$4+2\eta<\alpha+1$ and $2+\eta<\alpha$.
Because $q_{2,N}\geq0$, equations
\eqref{eq:ward-trace-input}--\eqref{eq:ward-q2-ui-identity} imply
\begin{equation}
 \sup_N\mathbb E q_{2,N}^{2+\eta}<\infty.
 \label{eq:ward-q2-higher-bound}
\end{equation}
For any real numbers $v_i$,
$\sum_i v_i^4\leq(\sum_i v_i^2)^2$ and
$|\sum_i v_i^3|\leq(\sum_i v_i^2)^{3/2}$.
Applied to $v_i=u_i/(2N)$, these give
\begin{equation}
 0\leq q_{4,N}\leq q_{2,N}^2,\qquad
 |q_{3,N}|\leq q_{2,N}^{3/2}.
 \label{eq:ward-power-sum-bounds}
\end{equation}
Now choose $\varepsilon>0$ with
$2(1+\varepsilon)\leq2+\eta$ and
$4(1+\varepsilon)<\alpha+1$.
The bounds already proved control the $(1+\varepsilon)$-moments of
$q_{4,N}$ and $q_{2,N}^2$, while
\eqref{eq:ward-trace-input} controls those of $q_{1,N}^4$.
For the remaining products, H\"older's inequality gives
\begin{align*}
 \mathbb E|q_{3,N}q_{1,N}|^{1+\varepsilon}
 &\leq
 \left(\mathbb E q_{2,N}^{2(1+\varepsilon)}\right)^{3/4}
 \left(\mathbb E|q_{1,N}|^{4(1+\varepsilon)}\right)^{1/4},\\
 \mathbb E|q_{2,N}q_{1,N}^2|^{1+\varepsilon}
 &\leq
 \left(\mathbb E q_{2,N}^{2(1+\varepsilon)}\right)^{1/2}
 \left(\mathbb E|q_{1,N}|^{4(1+\varepsilon)}\right)^{1/2}.
\end{align*}
Both right-hand sides are bounded uniformly in $N$.
Consequently all five fourth-order products are uniformly
integrable; the second-order products are covered by the same
bounds.  Combine this with \eqref{eq:ward-almost-sure-limit} to
obtain the asserted expectation convergence and finiteness of all
limiting moments.
\end{proof}

\begin{proposition}[Fourth inverse moments]
\label{prop:fourth-inverse-moments}
Let
\[
 \mathcal D=(\alpha-1)(\alpha+h)(\alpha-1+h)(\alpha-3)(\alpha+3h),
 \qquad Q=\alpha^2-3\alpha+3h\alpha-6h.
\]
For $\mathsf{HP}_{\beta,\alpha/2}$ with $\alpha>3$,
\begin{equation}
 \mathbb E S_4=\frac{h^3\alpha^2}{16\mathcal D},\qquad
 \mathbb E S_2^2=\frac{h^2\alpha Q}{16\mathcal D}.
 \label{eq:fourth-inverse-moments-alpha}
\end{equation}
For $\alpha=m\beta$, define
\[
 \mathcal D_m=(m\beta-1)(m\beta-3)(2m+1)(2m+3)
                         ((2m+1)\beta-2),
 \qquad T_m=m(2m+3)\beta-6(m+1).
\]
Then \eqref{eq:fourth-inverse-moments-alpha} becomes
\begin{equation}
 \mathbb E S_4=\frac{\beta^3m^2}{16\mathcal D_m},\qquad
 \mathbb E S_2^2=\frac{\beta^2mT_m}{16\mathcal D_m}.
 \label{eq:fourth-inverse-moments-m}
\end{equation}
\end{proposition}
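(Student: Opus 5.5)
The plan is to pass to the limit $N\to\infty$ in the exact finite system \eqref{eq:ward-exact-fourth-system}, and then solve the resulting linear system. By \eqref{eq:ward-exact-second-moments}, $a_N\to a:=h/(4(\alpha-1)(\alpha+h))$ and $b_N\to b:=h/(4\alpha(\alpha-1)(\alpha+h))$. Also $k_N\to h/4$, while $\ell_N,\mu_N$ and all other explicit correction coefficients are $O(N^{-1})$. Lemma~\ref{lem:ward-fourth-ui} identifies the limits of $A_N,\dots,E_N$ with
\[
A=\E S_4,\quad B=\E S_3S_1,\quad C=\E S_2^2,\quad D=\E S_2S_1^2,\quad E=\E S_1^4 .
\]
Together with $a=\E S_2$ and $b=\E S_1^2$, these limits are finite. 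Letting $N\to\infty$ in each of the seven identities then gives a limiting linear system:
\begin{align*}
 0&=(3-3h-\alpha)A+h(2B+C),\\
 0&=A+(2-2h-\alpha)B+2hD,\\
 0&=2A+(1-h-\alpha)C+hD+\tfrac h4a,\\
 0&=2B+(1-h-\alpha)D+hE+\tfrac h4 b,\\
 0&=-\alpha B+3A,\qquad 0=-\alpha D+2B+C,\qquad 0=-\alpha E+3D .
\end{align*}
This is a system of seven linear equations in five unknowns, so two equations are redundant; that redundancy will serve as a consistency check.

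Next I eliminate variables. The fifth equation gives $B=3A/\alpha$, and the last gives $E=3D/\alpha$. Substituting into the first equation expresses $C$ in terms of $A$:
\[
hC=(\alpha+3h-3)A-\tfrac{6h}{\alpha}A .
\]
Substituting into the second equation expresses $D$ in terms of $A$:
\[
2hD=\Bigl(\tfrac{3(\alpha+2h-2)}{\alpha}-1\Bigr)A .
\]
Now every unknown is a multiple of $A$. The third equation becomes a single linear equation for $A$ with source term $-\tfrac h4 a$. The coefficient of $A$ in that equation should factor as a multiple of $(\alpha-3)(\alpha+3h)(\alpha-1+h)/(h^2\alpha)$. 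Solving it should yield $A=h^3\alpha^2/(16\mathcal D)$. Then $C$ follows from the relation above; here one checks that
\[
(\alpha+3h-3)-6h/\alpha=\frac{\alpha^2-3\alpha+3h\alpha-6h}{\alpha}=\frac{Q}{\alpha},
\]
which gives $C=QA/(h\alpha)=h^2\alpha Q/(16\mathcal D)$, exactly as claimed. Finally I verify that the remaining equations (the fourth and the sixth) are satisfied by these values. This confirms that no arithmetic slip occurred.

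The main obstacle is really bookkeeping rather than analysis. The analytic input (uniform integrability, and finiteness of the limits so that passing to the limit is legitimate) is already supplied by Lemma~\ref{lem:ward-fourth-ui}. One point needs care: the elimination divides by $h$, by $\alpha$, and by the coefficient multiplying $A$. I must check that this coefficient is nonzero for $\alpha>3$. The factors $(\alpha-3)$, $(\alpha+3h)$ and $(\alpha-1+h)$ are all positive in that range, so the limiting system has a unique solution and the values are forced. I would do the factorization of that determinant-like coefficient carefully, and cross-check it at a simple case such as $h=1$.

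For the specialization, substitute $h=\beta/2$ and $\alpha=m\beta$. Then
\[
\alpha+h=\tfrac{\beta(2m+1)}2,\qquad \alpha+3h=\tfrac{\beta(2m+3)}2,\qquad \alpha-1+h=\tfrac{(2m+1)\beta-2}2,
\]
so $\mathcal D=\beta^2\mathcal D_m/8$. Moreover $Q=\tfrac{\beta}{2}T_m$. These identities turn \eqref{eq:fourth-inverse-moments-alpha} directly into \eqref{eq:fourth-inverse-moments-m}.
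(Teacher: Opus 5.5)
Your strategy is the paper's own. You pass to the limit in \eqref{eq:ward-exact-fourth-system} via Lemma~\ref{lem:ward-fourth-ui}, write $B,C,D,E$ as multiples of $A$, and solve the third limiting equation. The coefficient of $A$ there is exactly $-(\alpha-1+h)(\alpha-3)(\alpha+3h)/(\alpha h)$. Your expressions $C=QA/(h\alpha)$ and $D=(\alpha-3+3h)A/(\alpha h)$ are correct, as are the nonvanishing argument and the specializations $\mathcal D=\beta^2\mathcal D_m/8$, $Q=\beta T_m/2$.

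The problem is the input: the limits you record for the second moments are each off by a factor $\alpha$. From \eqref{eq:ward-exact-second-moments},
\[
 a=\lim_N a_N=\frac{h\alpha}{4(\alpha-1)(\alpha+h)},\qquad
 b=\lim_N b_N=\frac{h}{4(\alpha-1)(\alpha+h)}.
\]
Your values violate the limiting relation $(1-h-\alpha)a+hb+h/4=0$. Your $a$ is also not the known value $\E S_2=m\beta/(4(m\beta-1)(2m+1))$. This matters, because the third equation gives
\[
 A=\frac{\alpha h^2a}{4(\alpha-1+h)(\alpha-3)(\alpha+3h)}.
\]
With your $a$, the computation therefore outputs $\E S_4=h^3\alpha/(16\mathcal D)$ and $\E S_2^2=h^2Q/(16\mathcal D)$, both too small by a factor $\alpha$. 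Your claim that solving ``should yield'' $h^3\alpha^2/(16\mathcal D)$ does not follow from what you wrote.

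Your proposed consistency check would not expose the slip either. The sixth equation is homogeneous and holds identically once $B,C,D$ are expressed through $A$. The fourth equation reduces to $A=\alpha^2h^2b/\bigl(4(\alpha-1+h)(\alpha-3)(\alpha+3h)\bigr)$. Together with the third, it only tests $a=\alpha b$, which your mis-scaled pair still satisfies. The overall scale is fixed only by the second relation $(1-h-\alpha)a+hb+h/4=0$ of the second-order system. You must therefore take $a,b$ correctly from that system, equivalently from \eqref{eq:ward-exact-second-moments}. With that correction your argument is complete and coincides with the paper's proof.
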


\begin{proof}
Denote the limiting fourth-order expectations by $A,B,C,D,E$,
in the order used above.  By
\eqref{eq:ward-exact-second-moments} and
Lemma~\ref{lem:ward-fourth-ui},
\[
 a:=\mathbb ES_2=\frac{h\alpha}{4(\alpha-1)(\alpha+h)},\qquad
 b:=\mathbb ES_1^2=\frac{h}{4(\alpha-1)(\alpha+h)}.
\]
Pass to the limit in \eqref{eq:ward-exact-fourth-system} to obtain
\begin{align}
 (3-3h-\alpha)A+h(2B+C)&=0,\notag\\
 A+(2-2h-\alpha)B+2hD&=0,\notag\\
 2A+(1-h-\alpha)C+hD+\frac h4a&=0,\notag\\
 2B+(1-h-\alpha)D+hE+\frac h4b&=0,\notag\\
 \alpha B=3A,\qquad \alpha D=2B+C,\qquad \alpha E&=3D.
 \label{eq:ward-limit-fourth-system}
\end{align}
The first equation and $B=3A/\alpha$ give
\[
 C=\frac{Q}{\alpha h}A.
\]
Then the equation for $D$ gives
\[
 D=\frac{\alpha-3+3h}{\alpha h}A.
\]
Substitute these two expressions into the third equation of
\eqref{eq:ward-limit-fourth-system}.
Its coefficient of $A$ is
\[
 2+(1-h-\alpha)\frac{Q}{\alpha h}
       +\frac{\alpha-3+3h}{\alpha}
 =-\frac{(\alpha-1+h)(\alpha-3)(\alpha+3h)}{\alpha h}.
\]
Every factor in the denominator used to solve for $A$ is positive
when $\alpha>3$ and $h>0$.  It follows that
\[
 A=\frac{\alpha h^2a}
          {4(\alpha-1+h)(\alpha-3)(\alpha+3h)}
   =\frac{h^3\alpha^2}{16\mathcal D}.
\]
The formula for $C$ follows.  Finally substitute
$\alpha=m\beta$ and $h=\beta/2$ to obtain
\eqref{eq:fourth-inverse-moments-m}.
\end{proof}

\begin{remark}
The same calculation records the mixed moments needed if the
fourth Taylor coefficient is computed in anchored coordinates:
\[
 \mathbb E(S_3S_1)=\frac3\alpha\mathbb ES_4,\qquad
 \mathbb E(S_2S_1^2)=
       \frac{\alpha-3+3h}{\alpha h}\mathbb ES_4,\qquad
 \mathbb ES_1^4=
       \frac{3(\alpha-3+3h)}{\alpha^2h}\mathbb ES_4.
\]
These are expectation identities under the stated integrability
condition; no endpoint moment assertion for a product of
stochastic-zeta factors is used in their proof.
\end{remark}


\section{A fourth-order expectation--Taylor lemma}
\label{sec:taylor}

The range $\alpha>3$ is obtained by distinguishing the first derivative from the
higher derivatives.  In the application the first derivative has moments of
orders less than $\alpha+1$, while compact suprema of higher derivatives have
moments of orders less than $\alpha$.  A fourth-order expansion therefore
requires a truncation argument when $3<\alpha\leq4$.

\begin{proposition}[Fourth-order expectation--Taylor lemma]
\label{prop:fourth-taylor}
Let $K\subset\mathbb R^d$ be compact and let $F:\mathbb R^d\to\mathbb R$
be measurable, $C^4$ on a neighbourhood of $0$, and satisfy
\[
 |F(u)|\leq C(1+\|u\|^D),\qquad u\in\mathbb R^d,
\]
for some $D>0$.  Suppose that
\[
 p>\max\{4,D\},\qquad q>\max\{3,D\},
 \qquad X\in L^p,\qquad Y,Z,W,M\in L^q,
\]
where $M\geq0$.  Assume that, uniformly for $b\in K$ and $1\leq j\leq d$,
\[
 U_j(t,b)=tb_jX+\frac{t^2b_j^2}{2}Y
       +\frac{t^3b_j^3}{6}Z+\frac{t^4b_j^4}{24}W+R_j(t,b),
 \qquad |R_j(t,b)|\leq C_K|t|^5M.
\]
Define random vectors
\[
 v_1=(b_jX)_j,\quad v_2=(b_j^2Y/2)_j,
 \quad v_3=(b_j^3Z/6)_j,\quad v_4=(b_j^4W/24)_j.
\]
Then, uniformly for $b\in K$ as $t\to0$,
\begin{align*}
 \mathbb EF(U(t,b))={}&F(0)+t\,\mathbb E DF(0)[v_1]\\
 &+t^2\mathbb E\left\{DF(0)[v_2]
                      +\tfrac12D^2F(0)[v_1,v_1]\right\}\\
 &+t^3\mathbb E\left\{DF(0)[v_3]+D^2F(0)[v_1,v_2]
                      +\tfrac16D^3F(0)[v_1,v_1,v_1]\right\}\\
 &+t^4\mathbb E\left\{DF(0)[v_4]+D^2F(0)[v_1,v_3]
                      +\tfrac12D^2F(0)[v_2,v_2]\right.\\
 &\hspace{12mm}\left.
                      +\tfrac12D^3F(0)[v_1,v_1,v_2]
                      +\tfrac1{24}D^4F(0)[v_1,v_1,v_1,v_1]\right\}
   +o(t^4).
\end{align*}
Every expectation in this formula is absolutely integrable.
\end{proposition}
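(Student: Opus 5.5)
The plan is a truncation argument: Taylor-expand $F$ only on an event where $U(t,b)$ is small, control everything else by Markov and H\"older bounds, and choose each truncation level to match the power of $t$ that multiplies the corresponding random variable. The first step is to check that every expectation in the displayed formula is absolutely integrable. The worst terms are $\mathbb E|X|^4$, $\mathbb E|X^2Y|$, $\mathbb E|XZ|$, $\mathbb E|Y|^2$ and $\mathbb E|W|$. They are finite by H\"older because $p>4$, $q>3$, and $2/p+1/q<1/2+1/3<1$.

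Fix $\rho>0$ such that $F$ is $C^4$ on $\{\|u\|\le\rho\}$. Let $\delta=\delta(t)\downarrow0$ slowly, and define the good event
\[
 E_t=\bigl\{|X|\le\delta t^{-1},\ |Y|\le\delta t^{-2},\ |Z|\le\delta t^{-3},\ |W|\le\delta t^{-4},\ M\le\delta t^{-5}\bigr\}.
\]
On $E_t$ the hypothesis on $U_j$ gives $\|U(t,b)\|\le C_K\delta\le\rho$ uniformly in $b\in K$. Each variable is thresholded at the inverse of the power of $t$ that accompanies it, and this is what allows the argument to use only $q>3$ moments for $Y,Z,W,M$.

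The complement is handled as follows. Markov's inequality gives $P(E_t^c)\lesssim \delta^{-p}t^{p}+\delta^{-q}t^{2q}+\dots$, which is $o(t^4)$ because $p>4$ and $2q>6$, provided $\delta$ decays slowly enough. For the growth bound, use $|F(U)|\le C(1+\|U\|^D)$ together with $\|U\|\le C(t|X|+t^2|Y|+t^3|Z|+t^4|W|+t^5M)$. The tail contributions are then estimated termwise, for example
\[
 t^{D}\,\mathbb E\bigl[|X|^D;\,|X|>\delta t^{-1}\bigr]\le t^{D}(\delta t^{-1})^{D-p}\,\mathbb E|X|^p=\delta^{D-p}t^{p}.
\]
This is $o(t^4)$, and it is here that $p>D$ and $q>D$ enter. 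Each cross term, such as $t^D\mathbb E[|X|^D;|Y|>\delta t^{-2}]$, is bounded by H\"older's inequality in the same way. Finally, reinserting the indicator $1_{E_t}$ in front of the main Taylor terms costs only $o(t^4)$, by their integrability combined with Markov bounds on $E_t^c$.

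On $E_t$, write $F(U)=\sum_{k\le4}\frac1{k!}D^kF(0)[U^{\otimes k}]+\omega(\|U\|)\|U\|^4$, where $\omega$ is bounded and $\omega(s)\to0$ as $s\to0$. Since $\|U\|\le C_K\delta(t)\to0$ on $E_t$, we have $\omega(\|U\|)\to0$ uniformly there, so the remainder term is $o(1)\cdot\mathbb E[\|U\|^4;E_t]$. It then suffices to show $\mathbb E[\|U\|^4;E_t]=O(t^4)$. For the $Y$ contribution, truncation gives
\[
 t^8\,\mathbb E[|Y|^4;E_t]\le t^8(\delta t^{-2})^{4-q}\,\mathbb E|Y|^q=O(t^{2q}),
\]
and the $Z,W,M$ contributions are handled identically. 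Next, substitute the expansion of $U_j$ into the multilinear terms and collect by powers of $t$; the terms of order $t^{\le4}$ are exactly those in the statement. The leftover monomials have total $t$-weight at least $5$, for instance $t^5X^3Y$, $t^6X^2Y^2$, $t^8Y^4$, $t^5X^3 R$-type pieces, and $t^{20}M^4$. Each of these must be shown to be $o(t^4)$ on $E_t$.

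This last bookkeeping step is the main obstacle. The plan is a uniform weighting rule: whenever a monomial contains more of some variable than its moment exponent permits, replace the excess power by the truncation bound (at most $\delta t^{-k}$ per factor of weight $k$), and apply H\"older to the remaining factors with exponents $p$ for $X$ and $q$ for the others. Because every variable is truncated at the reciprocal of its $t$-weight, a monomial of total weight $w\ge5$ is bounded by $o(1)\,t^{w}\cdot t^{-(w-4)}$ up to the integrable part, that is, by $o(t^4)$. I would verify this first for the borderline monomials $t^5X^3Y$ and $t^6X^2Y^2$ when $3<q\le4$. If the rule fails for some monomial, the fallback is to split thresholds more finely, giving $X$ a slightly different power $t^{-1+\kappa}$. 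Uniformity in $b$ follows throughout, since all constants depend on $b$ only through $\sup_K|b_j|$ and $C_K$.
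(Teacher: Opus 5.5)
Your proposal is correct and follows essentially the paper's route: restrict to a good event with a slowly vanishing truncation level, control the complement by Markov/H\"older bounds (with $p>D$, $q>D$ for the growth term), Taylor-expand on the good event using $\mathbb E[\|U\|^4;E_t]=O(t^4)$, and use truncation to discard the high-weight monomials. The only difference is that the paper lumps $Y,Z,W,M$ into a single variable $B$ thresholded at weight two and uses fractional truncation exponents, whereas you threshold each variable at its own weight. The bookkeeping step you flag does close: every monomial of degree at most three in $X$ and the $L^q$ variables is integrable ($3/q<1$, $1/p+2/q<1$, $2/p+1/q<1$), so for a non-integrable discarded degree-four monomial you replace one factor by its truncation bound (an $X$ if present, using $w\ge5$, and otherwise any factor, since the remaining three have weight at least six), which leaves $\delta$ times $t^{w'}$ with $w'\ge4$ times an integrable monomial.
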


\begin{proof}
Write $s=|t|$ and $B=|Y|+|Z|+|W|+M$.  All constants below can be
chosen uniformly on $K$.  For a sufficiently large deterministic $C_K$ set
\[
 A_s=C_Ks|X|,\qquad B_s=C_Ks^2B,
 \qquad G_s=\{A_s\leq\eta_s,\ B_s\leq\eta_s\},
 \qquad \eta_s=s^\theta,
\]
where $\theta>0$ will be chosen sufficiently small.  For $s\leq1$ we have
$\|U(t,b)\|\leq A_s+B_s$.

We first record the estimates needed to remove this event.  For nonnegative
integers $a,b$ satisfying $a/p+b/q<1$, H\"older's inequality gives
\begin{align}
 s^k\mathbb E[|X|^aB^b;\,A_s>\eta_s]
 &\leq C s^{k+(1-\theta)\{p(1-b/q)-a\}},
 \label{eq:tail-slope}\\
 s^k\mathbb E[|X|^aB^b;\,B_s>\eta_s]
 &\leq C s^{k+(2-\theta)\{q(1-a/p)-b\}}.
 \label{eq:tail-higher}
\end{align}
For example, to prove \eqref{eq:tail-slope} when $b>0$, apply H\"older
with exponents $q/b$ and $q/(q-b)$, then use
$\mathbb E[|X|^r;|X|>L]\leq L^{r-p}\mathbb E|X|^p$
with $r=aq/(q-b)<p$.  The cases $a=0$ or $b=0$ follow by the same
argument, using a probability in place of the zeroth moment.

The monomials of weighted order less than four appearing in the desired
expansion are bounded by the following list.  The last two columns give
the exponents in \eqref{eq:tail-slope}--\eqref{eq:tail-higher} at
$\theta=0$:
\[
\begin{array}{c|c|c}
\text{monomial}&\text{slope-tail exponent}&\text{higher-tail exponent}\\ \hline
1&p&2q\\
s|X|&p&1+2q(1-1/p)\\
s^2X^2&p&2+2q(1-2/p)\\
s^2B&2+p(1-1/q)&2q\\
s^3|X|^3&p&3+2q(1-3/p)\\
s^3|X|B&2+p(1-1/q)&1+2q(1-1/p)\\
s^3B&3+p(1-1/q)&1+2q
\end{array}
\]
Every entry exceeds four when $p>4$ and $q>3$.  Consequently one can
choose $0<\theta<1$ so that all these exponents still exceed four and
also $p(1-\theta)>4$, $q(2-\theta)>4$.
The fourth-order monomials are bounded by multiples of
\[
 s^4\bigl(|X|^4+X^2B+B^2+|X|B+B\bigr).
\]
Their random factors are integrable: in particular
$2/p+1/q<1$.  Since $\mathbf1_{G_s^c}\to0$ almost surely, dominated
convergence removes their bad-event restrictions with error $o(s^4)$.

The actual function also has a negligible bad-event contribution.
Partition $G_s^c$ according to which of $A_s,B_s$ is larger.  On the
first piece $\|U\|\leq2A_s$ and $A_s>\eta_s$, and on the second
$\|U\|\leq2B_s$ and $B_s>\eta_s$.  The constant part of the growth
bound contributes $o(s^4)$ by the probability estimates above.  Because
$p>D$ and $q>D$, the remaining part is bounded by
\begin{align*}
 \mathbb E[A_s^D;A_s>\eta_s]
 &\leq C s^p\eta_s^{D-p}=o(s^4),\\
 \mathbb E[B_s^D;B_s>\eta_s]
 &\leq C s^{2q}\eta_s^{D-q}=o(s^4).
\end{align*}
Thus $\mathbb E[|F(U)|;G_s^c]=o(s^4)$.

On $G_s$, Taylor's theorem for $F$ gives
\[
 F(U)=\sum_{r=0}^4\frac1{r!}D^rF(0)[U^{\otimes r}]
           +\mathcal R_F(U),
 \qquad |\mathcal R_F(U)|\leq\omega(\|U\|)\|U\|^4,
\]
\[
 \qquad \omega(v)\longrightarrow0\quad(v\downarrow0).
\]
Here $\mathbb E[A_s^4]=O(s^4)$ and
$\mathbb E[B_s^4;G_s]=o(s^4)$.  For the latter assertion, if $q<4$,
\[
 \mathbb E[B_s^4;G_s]
 \leq C s^{2q}\eta_s^{4-q}\mathbb EB^q=o(s^4),
\]
and if $q\geq4$ it follows from $\mathbb E B_s^4=O(s^8)$.
The expected Taylor remainder on $G_s$ is therefore $o(s^4)$.

We finally expand the Taylor polynomial in powers of $t$ and discard
weighted orders greater than four.  Here $X$ has weight one and every
occurrence of $B$ has weight at least two.  All such discarded terms
whose actual power of $s$ exceeds four but whose minimum weight is at
most four have integrable random factors among
$B$, $|X|B$, $X^2B$ and $B^2$, so contribute $O(s^5)$.
It remains to consider
\[
 s^{a+2b}\mathbb E[|X|^a B^b;G_s],
 \qquad a+b\leq4,\quad a+2b>4.
\]
If $a/p+b/q\leq1$, this is $O(s^{a+2b})=o(s^4)$ by H\"older.
Otherwise, using H\"older on $X$ and truncating $B$ at
$C\eta_s/s^2$ gives
\[
 s^{a+2b}\mathbb E[|X|^a B^b;G_s]
 \leq C s^{a+2q(1-a/p)}
                 \eta_s^{\,b-q(1-a/p)}.
\]
The exponent of $\eta_s$ is positive.  For $a=0,1,2,3$ the exponents
of $s$ in this display are respectively
\[
 2q,\quad 1+2q(1-1/p),\quad 2+2q(1-2/p),
 \quad 3+2q(1-3/p),
\]
all exceeding four.  This proves that all discarded terms are
$o(s^4)$.  Removing $G_s$ from the retained terms by the preceding
tail estimates proves the claimed expansion.  All remainder bounds are
uniform on $K$.
\end{proof}

\begin{remark}[Uniform families]
\label{rem:uniform-taylor}
The conclusion of Proposition~\ref{prop:fourth-taylor} is uniform over an
additional index $N$ if the pathwise remainder constants and the $L^p$ and
$L^q$ bounds are uniform in $N$.  Indeed, all bad-event estimates above
are already uniform.  For the fourth-order monomials, the strict
inequalities $4/p<1$, $2/p+1/q<1$ and $2/q<1$ give a uniform
$L^{1+\epsilon}$ bound for some $\epsilon>0$, which replaces the
individual dominated-convergence step by uniform integrability.
\end{remark}

\section{Proof of the fourth-order fusion theorem}\label{sec:main-proof}
We use the moment identities from Section~\ref{sec:ward} and the expectation--Taylor lemma from Section~\ref{sec:taylor}. Put $d=m-1$ and
\[
 v_j=a_{j+1}-a_1,\quad 1\le j\le d,\qquad
 P_r=\sum_{j=1}^d v_j^r.
\]
The total growth exponent of $\prod_{j=1}^d|1+u_j|^\beta$ is $d\beta=\alpha-\beta<\alpha$. Since $\alpha>3$, exponents may be chosen so that
\begin{equation}\label{eq:exponent-choice}
 \max\{4,d\beta\}<p<\alpha+1,\qquad
 \max\{3,d\beta\}<q<\alpha.
\end{equation}
Apply the expectation--Taylor lemma with
\[
 X=\xi'(0),\quad Y=\xi''(0),\quad Z=\xi'''(0),\quad W=\xi''''(0),
 \quad M=\sup_{|z|\le R}|\xi^{(5)}(z)|,
\]
using Proposition~\ref{prop:inputs}. For profiles in a fixed compact set, the same $R$ and all Taylor constants can be chosen uniformly.

The pathwise logarithmic expansion \eqref{eq:log-series} identifies the coefficients as those of
\[
 \exp\left(-\beta\sum_{r=1}^4\frac{P_rS_r}{r}t^r\right)
 \quad\text{through degree four}.
\]
Reflection symmetry of the environment makes the first- and third-order expected coefficients zero. The second-order coefficient is
\[
 \frac{\beta^2P_1^2}{2}\E S_1^2-\frac{\beta P_2}{2}\E S_2
 =-\frac{\beta}{2}\left(P_2-\frac{P_1^2}{m}\right)\E S_2.
\]
Here $\E S_2=\alpha\E S_1^2$, and $P_2-P_1^2/m=p_2(a)$. The degree-four coefficient is
\begin{align}\label{eq:anchored-c4}
 &\frac{\beta^4P_1^4}{24}\E S_1^4
 -\frac{\beta^3P_1^2P_2}{4}\E S_1^2S_2
 +\frac{\beta^2P_2^2}{8}\E S_2^2\nonumber\\
 &\hspace{18mm}+\frac{\beta^2P_1P_3}{3}\E S_1S_3
 -\frac{\beta P_4}{4}\E S_4.
\end{align}
The Ward relations
\begin{equation}\label{eq:mixed-reduction}
 \alpha\E S_1S_3=3\E S_4,\qquad
 \alpha\E S_1^2S_2=2\E S_1S_3+\E S_2^2,\qquad
 \alpha\E S_1^4=3\E S_1^2S_2
\end{equation}
reduce \eqref{eq:anchored-c4} to
\begin{equation}\label{eq:centered-c4}
 \frac{\beta^2p_2(a)^2}{8}\E S_2^2-\frac{\beta p_4(a)}4\E S_4.
\end{equation}
Indeed,
\[
 p_4(a)=P_4-\frac4mP_1P_3+\frac6{m^2}P_1^2P_2-\frac3{m^3}P_1^4.
\]
The explicit moments of Section~\ref{sec:ward} now give \eqref{eq:main-coefficient}, while their degree-two counterparts give \eqref{eq:second-order}. Equation~\eqref{eq:correlation-representation} and the uniform remainder in the Taylor lemma complete the proof of Theorem~\ref{thm:main}.

This calculation keeps the total power in the analytic argument strictly below $\alpha$. Centering the final polynomial is an algebraic use of \eqref{eq:mixed-reduction}; it does not require an expectation estimate for $m$ unanchored factors at the endpoint total power $\alpha$.

\section{Classical checks and the fourth-moment boundary}\label{sec:checks}
For $\beta=2$, the exact pair correlation is
\[
 \rho_2^{(2)}(0,x)=\frac1{4\pi^2}\left[1-\left(\frac{\sin(x/2)}{x/2}\right)^2\right].
\]
Its normalized expansion is
\begin{equation}\label{eq:beta2-check}
 R_{2,2}(x;0,1)=1-\frac{x^2}{30}+\frac{x^4}{1680}+O(x^6).
\end{equation}
For $\beta=4$, write $\sinc x=\sin x/x$. The classical formula \cite{Forrester,QV} is
\[
 \rho_4^{(2)}(0,x)=\frac1{4\pi^2}
 \left[1-\sinc^2 x+(\sinc x)'\int_0^x\sinc t\dd t\right].
\]
The bracket equals $\frac{x^4}{135}(1-\frac2{35}x^2+\frac2{1225}x^4+O(x^6))$, and hence
\begin{equation}\label{eq:beta4-check}
 R_{4,2}(x;0,1)=1-\frac2{35}x^2+\frac2{1225}x^4+O(x^6).
\end{equation}
Both fourth-order coefficients agree with Corollary~\ref{cor:pair}.

For $\beta=2$ and arbitrary $m$, the determinant formula is
\[
 \rho_2^{(m)}(x_1,\ldots,x_m)
 =\det\!\left[\frac{\sin((x_i-x_j)/2)}{\pi(x_i-x_j)}\right]_{i,j=1}^m,
\]
with diagonal entries $1/(2\pi)$. Expansion with rational Taylor coefficients gives the following normalized coefficients. The different four-point profiles test both fourth-degree invariants.
\begin{center}
\begin{tabular}{@{}ccc@{}}
\toprule
$m$ & Profile $a$ & Coefficient of $\varepsilon^4$\\
\midrule
3 & $(0,1,2)$ & $1/280$\\
3 & $(0,1,3)$ & $7/360$\\
4 & $(0,1,2,3)$ & $19/1540$\\
4 & $(0,1,2,4)$ & $3/80$\\
4 & $(0,1,3,7)$ & $22399/55440$\\
\bottomrule
\end{tabular}
\end{center}
Every entry agrees with \eqref{eq:main-coefficient}. These calculations are consistency checks; the proof for arbitrary $\beta$ is the Ward and Taylor argument above.

The denominator in \eqref{eq:main-coefficient} contains $m\beta-3$. This boundary is consistent with the local fourth inverse-moment integral $\int_0^1x^{m\beta-4}\dd x$. The condition $m\beta>3$ is a sufficient range for the present proof; we do not prove that it is necessary for a fourth-order expansion. Theorem~\ref{thm:main} makes no assertion at the boundary itself. In particular, the apparent pole of a combined coefficient can cancel. At $m=3$, $\beta=1$, one has $\mathcal T_{\beta,m}=3$ and $p_4=p_2^2/2$, so the numerator of \eqref{eq:main-coefficient} vanishes. Such cancellation does not justify taking a limit in the asymptotic remainder or rule out other nonanalytic terms.

\section{Fusion coefficients for general external potentials}
\label{sec:conditional-universality}

Bulk universality for general beta ensembles is established in settings such as \cite{BEY}. Merging singularities in unitary ensembles have also been studied by analytic methods \cite{CF,CK}. We first identify the exact finite-particle quadratic coefficient for general $\beta$. We then prove its universality, together with that of the quartic coefficient, for the unitary class under ordinary bulk hypotheses. For arbitrary $\beta$, a separate transfer criterion specifies the additional fused-environment estimates still required.

\subsection{Finite ensembles and their quadratic coefficients}\label{subsec:finite-general}

Fix $m\geq2$, $\beta>0$ with $\alpha:=m\beta>1$, and a potential
$U\in C^2(\mathbb R)$ satisfying
\begin{equation}
 U(y)\geq c y^2-C\qquad (y\in\mathbb R)
 \label{eq:univ-confinement}
\end{equation}
for some $c>0$ and $C<\infty$.  For $N\geq m+1$, consider the density
\begin{equation}
 p_N(y_1,\ldots,y_N)
 =\frac{1}{Z_N}
  \exp\!\left\{-\frac{\beta N}{2}\sum_{j=1}^N U(y_j)\right\}
  |\Delta_N(y)|^\beta
 \label{eq:univ-ensemble}
\end{equation}
with respect to Lebesgue measure on $\mathbb R^N$.
Here $\Delta_k(y)=\prod_{i<j}(y_j-y_i)$, and $\rho_N^{(m)}$ denotes the
factorial $m$-point correlation density.  Let $E$ be a bulk point at which
the equilibrium density $\varrho_U(E)$ exists and is strictly positive.
Set
\begin{equation}
 \ell_N=\frac{1}{2\pi N\varrho_U(E)},\qquad
 \widehat\rho_N^{(m)}(x_1,\ldots,x_m)
 =\ell_N^m\rho_N^{(m)}(E+\ell_Nx_1,\ldots,E+\ell_Nx_m).
 \label{eq:univ-unfolding}
\end{equation}
This convention matches the intensity $1/(2\pi)$ used for
$\mathsf{Sine}_\beta$.

Write $M=N-m$.  The finite fused environmental law $Q_{N,m,E}$ is the
probability measure on $\mathbb R^M$ with density
\begin{equation}
 \frac{1}{Z_{N,m,E}^{\mathrm{fus}}}
 \exp\!\left\{-\frac{\beta N}{2}\sum_{j=1}^M U(y_j)\right\}
 |\Delta_M(y)|^\beta\prod_{j=1}^M|y_j-E|^{m\beta}.
 \label{eq:univ-fused-law}
\end{equation}
In particular, the coefficient of $U$ is $N$, not $N-m$.
The unfolded environmental points are $x_j=(y_j-E)/\ell_N$.
Let $r_N(x)$ be their one-point intensity under $Q_{N,m,E}$, and put
\begin{equation}
 S_{2,N}=\sum_{j=1}^M x_j^{-2}.
 \label{eq:univ-S2}
\end{equation}

For a pairwise distinct centered profile $b=(b_1,\ldots,b_m)$, define
\begin{equation}
 \sum_{i=1}^m b_i=0,\qquad B(b)=\sum_{i=1}^m b_i^2,
 \qquad \mathcal V(b)=\sum_{i<j}(b_i-b_j)^2=mB(b),
 \qquad d=\beta\binom m2.
 \label{eq:univ-profile}
\end{equation}
The exact finite-$N$ collision amplitude is
\begin{equation}
 A_N(E)=\frac{(N)_m}{Z_N}\,
  \ell_N^{m+d}\exp\!\left\{-\frac{\beta Nm}{2}U(E)\right\}
  Z_{N,m,E}^{\mathrm{fus}}>0,
 \label{eq:univ-amplitude}
\end{equation}
where $(N)_m=N!/(N-m)!$.  We normalize by this amplitude:
\begin{equation}
 R_N(\varepsilon;b)=
 \frac{\widehat\rho_N^{(m)}(\varepsilon b_1,\ldots,\varepsilon b_m)}
 {A_N(E)|\varepsilon|^d|\Delta_m(b)|^\beta}
 \quad(\varepsilon\ne0),\qquad R_N(0;b)=1.
 \label{eq:univ-normalized-R}
\end{equation}

\begin{proposition}[Exact finite-particle quadratic coefficient]
\label{prop:univ-finite-coefficient}
Under the preceding assumptions, for each fixed $N$,
\begin{equation}
 R_N(\varepsilon;b)=1-c_N(b)\varepsilon^2+o_N(\varepsilon^2),
 \label{eq:univ-finite-expansion}
\end{equation}
where
\begin{equation}
 c_N(b)=\frac{\beta B(b)}2\,
       \mathbb E_{Q_{N,m,E}}S_{2,N}
       +\frac{\beta N\ell_N^2B(b)}4\,U''(E).
 \label{eq:univ-finite-coefficient}
\end{equation}
In particular, $c_N(b)$ is the limit of
$(1-R_N(\varepsilon;b))/\varepsilon^2$ as $\varepsilon\to0$.
\end{proposition}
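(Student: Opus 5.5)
We write the factorial correlation exactly and isolate the fused environment. By definition,
\[
\rho_N^{(m)}(y_1,\dots,y_m)=\frac{(N)_m}{Z_N}\int_{\R^M}e^{-\frac{\beta N}{2}\sum_{i\le N}U(y_i)}|\Delta_N(y)|^\beta\,\dd y_{m+1}\cdots\dd y_N .
\]
Insert $y_i=E+\ell_N\varepsilon b_i$ for $i\le m$ and factor $|\Delta_N|^\beta=|\Delta_m|^\beta\,|\Delta_M|^\beta\prod_{i\le m}\prod_{j\le M}|y_j-E-\ell_N\varepsilon b_i|^\beta$. The remaining integral then equals $Z^{\mathrm{fus}}_{N,m,E}$ times the $Q_{N,m,E}$-expectation of $\prod_{i,j}|1-\ell_N\varepsilon b_i/(y_j-E)|^\beta$. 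This uses $\prod_i|y_j-E|^\beta=|y_j-E|^{m\beta}$. With the unfolded points $x_j=(y_j-E)/\ell_N$, the factor $\ell_N^{m+d}$ in $A_N(E)$ absorbs $\ell_N^m$ and $|\Delta_m(\ell_N\varepsilon b)|^\beta=\ell_N^d|\varepsilon|^d|\Delta_m(b)|^\beta$. This gives the exact identity
\[
R_N(\varepsilon;b)=\exp\Bigl\{-\tfrac{\beta N}{2}\sum_{i}\bigl[U(E+\ell_N\varepsilon b_i)-U(E)\bigr]\Bigr\}\;\E_{Q_{N,m,E}}\prod_{i=1}^m\prod_{j=1}^M\Bigl|1-\frac{\varepsilon b_i}{x_j}\Bigr|^\beta .
\]

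The deterministic prefactor is expanded first. Since $U\in C^2$ and $\sum_i b_i=0$, the linear term $U'(E)\ell_N\varepsilon\sum_i b_i$ vanishes, and the sum in the exponent is $\tfrac12U''(E)\ell_N^2\varepsilon^2B(b)+o(\varepsilon^2)$. This contributes $-\frac{\beta N\ell_N^2B(b)}{4}U''(E)\varepsilon^2$, the second term of \eqref{eq:univ-finite-coefficient}.

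For the expectation, set $G(\varepsilon)=\prod_{i,j}|1-\varepsilon b_i/x_j|^\beta$. On the event that all $|x_j|$ are large compared with $\varepsilon$, we have
\[
\log G=-\beta\varepsilon\sum_i b_i\sum_j x_j^{-1}-\tfrac{\beta}{2}\varepsilon^2B(b)\,S_{2,N}+O\Bigl(\varepsilon^3\sum_j|x_j|^{-3}\Bigr).
\]
The first-order term vanishes identically because $\sum_ib_i=0$. This centering is what removes the principal-value sum $\sum_jx_j^{-1}$ from the problem and avoids any second-moment requirement on it. The expected answer is therefore $\E G=1-\frac{\beta B}{2}\varepsilon^2\E S_{2,N}+o(\varepsilon^2)$. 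Multiplying the two expansions yields \eqref{eq:univ-finite-expansion}, and the limit characterization of $c_N(b)$ follows immediately.

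The main obstacle is justifying the expectation expansion near the singularities. There $x_j$ can be comparable to $\varepsilon$, and $\E S_{2,N}$ is finite only because the density carries $|x|^{m\beta}$ with $m\beta>1$. The plan is to split each environmental point according to whether $|x_j|>K|\varepsilon|$ or not, for a large fixed $K$. On the far region, use the pointwise bound
\[
\Bigl|\log|1-w|+\mathrm{Re}\,w+\tfrac12w^2\Bigr|\le C|w|^3,\qquad |w|\le 1/2 .
\]
This reduces the error to $\varepsilon^3\E\sum_j|x_j|^{-3}\mathbf 1_{|x_j|>K|\varepsilon|}$. Writing it through the one-point intensity as $\varepsilon^3\int_{|x|>K|\varepsilon|}|x|^{-3}r_N(x)\dd x$ and using $r_N(x)\le C|x|^{m\beta}$ near $0$, this is $o(\varepsilon^2)$ whenever $m\beta>1$. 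The bound on $r_N$ comes from the explicit finite-dimensional density, with $N$ fixed and $U$ continuous. Products over many $j$ are handled by $|e^a-1-a|\le|a|^2e^{|a|}$, with the uniform bound on $\log G$ on the good event. The near region, where some $|x_j|\le K|\varepsilon|$, has probability $O(|\varepsilon|^{m\beta+1})$. On it the integrand $G$ remains integrable, since the factor $|x_j-\varepsilon b_i|^\beta$ combined with $|x_j|^{m\beta}$ is bounded by a constant times $|\varepsilon|^{m\beta+\beta}$-scale quantities on that window. Its contribution, and that of $\varepsilon^2\E[S_{2,N};\text{near}]$, is therefore $o(\varepsilon^2)$ precisely because $m\beta>1$. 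Tail growth of $G$ at large $|x_j|$ is controlled by the quadratic confinement \eqref{eq:univ-confinement} and the $e^{-\beta N U/2}$ weight.
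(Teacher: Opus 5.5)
Your proposal is correct. It agrees with the paper on the exact ratio identity and on the external-potential prefactor, but it expands the fused expectation by a different decomposition.

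\textbf{The paper's route.} The paper uses no events. It proves the one-variable weighted $L^1$ expansion $g_t(z)=\prod_i|z-tb_i|^\beta=|z|^{\alpha}-\frac{\beta B(b)}{2}t^2|z|^{\alpha-2}+o_{L^1(w)}(t^2)$, which absorbs the collision window $|z|\le C_b|t|$ into that single-variable estimate. It then transfers this to the $M$-particle integral by telescoping $\prod_j g_t(y_j-E)-\prod_j g_0(y_j-E)$. The only input is that the unfused density is dominated by a product of polynomial--Gaussian weights. Because telescoping linearizes the product exactly, the paper needs no quadratic remainder, no bound on $r_N$, and no separate bad-event estimate.

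\textbf{Your route.} You use a global good event, a pointwise expansion of $\log G$, and $|e^a-1-a|\le|a|^2e^{|a|}$. This requires three extra inputs: the origin bound $r_N(x)\le C_N|x|^{m\beta}$, an estimate of $\E[G;\text{near}]$, and control of $\E[|\log G|^2;\text{good}]$. The last two need slightly more care than you wrote.

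\emph{Near event.} Under $Q_{N,m,E}$, multiplying by $G$ replaces $|x_j|^{m\beta}$ by $\prod_i|x_j-\varepsilon b_i|^\beta$. On $|x_j|\le K|\varepsilon|$ this is at most $C|\varepsilon|^{m\beta}$; the relevant scale is $|\varepsilon|^{m\beta}$, not $|\varepsilon|^{m\beta+\beta}$. Integrating over the window, with the other coordinates dominated by product weights, gives $O(|\varepsilon|^{m\beta+1})=o(\varepsilon^2)$.

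\emph{Quadratic remainder.} $\E S_{2,N}^2$ can be infinite when $m\beta\le3$, so the remainder must be handled on the good event. There one has $|\log G|\le C\varepsilon^2S_{2,N}$ and $|\log G|\le C_N$. Hence $|\log G|^2\le C\varepsilon^2S_{2,N}\min\{C_N,C\varepsilon^2S_{2,N}\}$, which is $o(\varepsilon^2)$ in expectation by dominated convergence.

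With these details filled in, your argument is a valid, if longer, alternative. It is phrased through the intensity $r_N$, the language of Theorem~\ref{thm:univ-conditional-transfer}. The paper's telescoping proof is shorter and purely analytic.
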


\begin{proof}
Dividing the defining correlation integral by
\eqref{eq:univ-amplitude} gives the exact identity
\begin{align}
 R_N(\varepsilon;b)
 &=\exp\!\left\{-\frac{\beta N}{2}
       \sum_{i=1}^m[U(E+\ell_N\varepsilon b_i)-U(E)]\right\}
 \nonumber\\
 &\hspace{1em}\times
 \mathbb E_{Q_{N,m,E}}
  \prod_{j=1}^M\prod_{i=1}^m
       \left|1-\frac{\varepsilon b_i}{x_j}\right|^\beta.
 \label{eq:univ-exact-ratio}
\end{align}
The external-potential factor is
$1-\beta N\ell_N^2B(b)U''(E)\varepsilon^2/4+o_N(\varepsilon^2)$.

We justify the quadratic expansion of the integral at the fused
configuration, without differentiating twice at separated singularities.
For $t\in\mathbb R$, set
$g_t(z)=\prod_i|z-tb_i|^\beta$.  Since $\sum_i b_i=0$, one has the
weighted $L^1$ expansion
\begin{equation}
 g_t(z)=|z|^\alpha
       -\frac{\beta B(b)}2t^2|z|^{\alpha-2}
       +o_{L^1(w(z)\,dz)}(t^2),
 \label{eq:univ-L1-taylor}
\end{equation}
for every weight $w(z)=(1+|z|)^K e^{-a z^2}$ with $a>0$ and $K\geq0$.
Indeed, on $|z|\leq C_b|t|$, the integral of the difference in
\eqref{eq:univ-L1-taylor}, after division by $t^2$, is
$O(|t|^{\alpha-1})$.  On the complement, Taylor expansion of
$\prod_i|1-tb_i/z|^\beta$ gives an error bounded by
$C|t||z|^{\alpha-3}$ after division by $t^2$.
Its integral between $C_b|t|$ and $1$ tends to zero for every
$\alpha>1$; its weighted integral beyond $1$ does so as well.
Here $C_b$ is chosen larger than $2\max_i|b_i|$.

The density in the correlation integral before inserting the factors
$g_t(y_j-E)$ is bounded by a constant times a product of
polynomial-Gaussian weights, by \eqref{eq:univ-confinement}.
Applying \eqref{eq:univ-L1-taylor} to each factor, and using the finite
product telescoping identity, therefore gives a valid expansion of that
integral.  The coefficient is the sum obtained by replacing one
$|y_j-E|^\alpha$ by
$-\beta B(b)|y_j-E|^{\alpha-2}/2$.
These integrals are finite because $\alpha>1$.
With $t=\ell_N\varepsilon$, the expectation in
\eqref{eq:univ-exact-ratio} consequently equals
\[
 1-\frac{\beta B(b)}2\varepsilon^2
      \mathbb E_{Q_{N,m,E}}S_{2,N}+o_N(\varepsilon^2).
\]
Multiplying the two expansions proves the proposition.
\end{proof}

\subsection{Unconditional coefficient universality in the unitary class}
\label{subsec:unitary-universality}

For $\beta=2$, determinant structure gives a stronger conclusion without
assumptions on the fused environmental law.  We require $U\in C^4(\mathbb R)$,
\eqref{eq:univ-confinement}, and that its equilibrium measure have a
continuous strictly positive density in an open interval containing $E$.
These are hypotheses on the original potential and its ordinary equilibrium
measure; no convergence or inverse-moment assumption on $Q_{N,m,E}$ is
imposed.  The result permits nonanalytic potentials and several support intervals,
provided the equilibrium density has this local bulk regularity.

\begin{theorem}[Unitary fusion coefficients for general potentials]
\label{thm:unitary-universality}
Let $\beta=2$, let $U\in C^4(\mathbb R)$ satisfy
\eqref{eq:univ-confinement}, and suppose its equilibrium measure is
absolutely continuous with a continuous strictly positive density on an
open interval containing $E$. Let $m\ge2$ be fixed and use the exact
normalization $A_N(E)$ in \eqref{eq:univ-amplitude}.  For every compact set
of centered real profiles $b$, the functions $R_N(t;b)$ converge locally in
$C^4$ in $t$ to $R_{2,m}(t;b)$, uniformly in $b$.  Profiles with coincident
coordinates are interpreted by continuous extension after dividing out
$\Delta_m(b)^2$.  Moreover,
\[
 A_N(E)\longrightarrow C_2^{(m)}.
\]
Writing
\[
 R_N(t;b)=1-c_N(b)t^2+d_N(b)t^3+e_N(b)t^4+o_N(t^4),
\]
one has, uniformly on such compact profile sets,
\begin{align*}
 c_N(b)&\longrightarrow
 \kappa_{2,m}(b):=\frac{m p_2(b)}{2(4m^2-1)},\\
 d_N(b)&\longrightarrow0,\\
 e_N(b)&\longrightarrow
 \kappa_{4,m}(b):=
 \frac{(2m^2-3)p_2(b)^2-mp_4(b)}
      {16(4m^2-1)(4m^2-9)}.
\end{align*}
More precisely, for each compact profile set $\mathcal K$ there is a
function $\omega_{\mathcal K}(s)\to0$ as $s\downarrow0$ such that,
for all $N\ge m+1$, $b\in\mathcal K$, and sufficiently small $|t|$,
\[
 \left|R_N(t;b)-1+c_N(b)t^2-d_N(b)t^3-e_N(b)t^4\right|
 \le |t|^4\omega_{\mathcal K}(|t|).
\]
Consequently, for every sequence of nonzero real numbers $t_N\to0$,
\[
 R_N(t_N;b)=1-\kappa_{2,m}(b)t_N^2+o(t_N^2).
\]
\end{theorem}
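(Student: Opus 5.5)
The approach is to use the determinantal structure for $\beta=2$ and to move every differentiation in $t$ onto a polynomial object, so that complex kernel asymptotics combined with Cauchy estimates control all derivatives at once. Write the correlation as
\[
\rho_N^{(m)}(y)=\exp\Bigl\{-N\sum_i U(y_i)\Bigr\}\det[P_N(y_i,y_j)],
\]
where $P_N(x,y)=\sum_{k<N}p_k(x)p_k(y)$ is the polynomial reproducing kernel for the weight $e^{-NU}$. The function $\det[P_N(y_i,y_j)]/\Delta_m(y)^2$ is a symmetric polynomial in $y$. It can be written as a determinant of divided differences of $P_N$ in each variable, which stays regular at a full collision. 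After unfolding $y_i=E+\ell_N t b_i$, I would split $R_N(t;b)$ as a ratio
\[
R_N(t;b)=\frac{W_N(t;b)\,G_N(t;b)}{G_N(0;b)}.
\]
Here $W_N=\exp\{-N\sum_i[U(E+\ell_N t b_i)-U(E)]\}$. The factor $G_N$ is $\ell_N^{m+d}e^{-NmU(E)}$ times the divided-difference determinant evaluated at the unfolded points. The identification $G_N(0;b)=A_N(E)$ is a bookkeeping step: dividing the correlation integral by $Z_{N,m,E}^{\mathrm{fus}}$ exactly as in the proof of Proposition~\ref{prop:univ-finite-coefficient} gives it.

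Step one treats the weight factor. Its $t$-derivatives of order $k$ involve $N\ell_N^k U^{(k)}$. The first-order term carries $\sum_i b_i=0$ and vanishes. For $k\ge2$ one has $N\ell_N^k=O(N^{1-k})\to0$, uniformly for $b$ compact and $|t|$ bounded, with a uniform modulus of continuity for the fourth derivative inherited from $U''''$. Hence $W_N\to1$ in $C^4$, and the fourth-order Taylor remainder of $W_N$ is uniform.

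Step two treats the polynomial factor, and it is the main obstacle. I would invoke the Levin--Lubinsky complex limit \cite{LL}: under the stated local bulk hypotheses on the equilibrium density,
\[
\ell_N e^{-NU(E)}P_N(E+\ell_N z,E+\ell_N w)\longrightarrow \frac{\sin((z-w)/2)}{\pi(z-w)}
\]
uniformly for $z,w$ in compact subsets of $\C$. The difficulty is to confirm that the cited theorem applies to varying weights $e^{-NU}$ with $U$ merely $C^4$, including the normalization $e^{-NU(E)}$. I would first try to derive this from their varying-weight results with $\varrho_U$ continuous and positive near $E$. Granting this, the unfolded polynomial kernels are entire and converge locally uniformly on $\C^2$. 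The divided differences are contour integrals (Hermite formula) of these kernels over circles of fixed radius in the unfolded variables. They therefore converge locally uniformly on $\C^{2m}$ to the corresponding divided differences of the sine kernel, even at coincident arguments. By the Cauchy estimates, $G_N(t;b)$ and all its $t$-derivatives up to order five converge uniformly for $b$ in a compact set and $|t|$ bounded, including $t=0$ and coincident profiles.

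Step three assembles the conclusion. The limit of $G_N(0;b)$ is the confluent sine-kernel determinant, which equals $C_2^{(m)}$ by the $\beta=2$ case of \eqref{eq:correlation-representation}; hence $A_N(E)\to C_2^{(m)}>0$. Dividing, $R_N\to R_{2,m}$ in $C^4$, uniformly in $b$. Uniform $C^5$ bounds on $G_N$, together with the uniform treatment of $W_N$, give the equicontinuous modulus $\omega_{\mathcal K}$ for the fourth-order remainder. The limiting coefficients come from Theorem~\ref{thm:main} at $\beta=2$ (where $m\beta>3$ for $m\ge2$). Evenness of $R_{2,m}$ in $t$, by reflection symmetry, gives $d_N\to0$. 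Substituting $\beta=2$ into \eqref{eq:second-order} and \eqref{eq:main-coefficient} yields $\kappa_{2,m}$ and $\kappa_{4,m}$. Finally, for $t_N\to0$, write $R_N(t_N)=1-c_Nt_N^2+O(t_N^3)$ uniformly, which is possible because the coefficients and the remainder are uniformly bounded. Combined with $c_N\to\kappa_{2,m}$, this gives the joint second-order limit.
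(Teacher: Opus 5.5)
Your route is the paper's: Levin--Lubinsky complex kernel asymptotics, the Hermite contour formula making the divided-difference determinant regular at collisions, Cauchy estimates, a weight factor tending to $1$ in $C^4$, $A_N(E)$ as the full-collision value, and coefficients from Theorem~\ref{thm:main} at $\beta=2$. However, the kernel limit you grant in Step two is false whenever $U'(E)\neq0$, so the normalization worry you raise is justified.

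Since $N\ell_N=1/(2\pi\varrho_U(E))$ is a fixed constant, $e^{-NU(E+\ell_Nz)}$ differs from $e^{-NU(E)}$ by the non-negligible factor $e^{-cz}$, where $c:=N\ell_NU'(E)$. On the diagonal, convergence of the unfolded density to $1/(2\pi)$ then forces $\ell_Ne^{-NU(E)}P_N(E+\ell_Nx,E+\ell_Nx)\to e^{cx}/(2\pi)$. What Levin--Lubinsky give, and what the paper uses, is
\[
 \ell_Ne^{-NU(E)}\exp\Bigl\{-\frac c2(z+w)\Bigr\}P_N(E+\ell_Nz,E+\ell_Nw)\longrightarrow\frac{\sin((z-w)/2)}{\pi(z-w)}
\]
locally uniformly on $\C^2$. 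Hence your divided differences converge to those of $e^{c(z+w)/2}\sin((z-w)/2)/(\pi(z-w))$, not to those of the sine kernel. Your determinant at unfolded nodes $x$ therefore converges to $e^{c\sum_ix_i}$ times the confluent sine-kernel determinant.

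The repair is short, but it uses the centering a second time. At $x=tb$ with $\sum_ib_i=0$, and at $x=0$, the factor $e^{c\sum_ix_i}$ is identically $1$. So your $G_N(t;b)$ does converge, together with all its $t$-derivatives, to $C_2^{(m)}R_{2,m}(t;b)$, and the rest of your argument goes through. The paper avoids the issue differently: it builds the linear correction into the kernel and puts only $\delta_N(x)=U(E+\ell_Nx)-U(E)-\ell_NU'(E)x$ into the weight factor. This gives an exact factorization of $\widehat\rho_N^{(m)}(x)/\Delta_m(x)^2$ for all $x$, and for centered profiles that weight factor is exactly your $W_N$.

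There is also a smaller omission. The stated form $R_N=1-c_Nt^2+\cdots$, and your final step $R_N(t_N)=1-c_Nt_N^2+O(t_N^3)$, need the linear coefficient to vanish for every $N$, not only in the limit. Evenness of $R_{2,m}$ gives only $d_N\to0$ and $R_N'(0)\to0$. You checked the vanishing for $W_N$. For $G_N$ it holds because the divided-difference determinant is symmetric in the nodes: its gradient at the full collision has equal components, so $\partial_tG_N(0;b)$ is a multiple of $\sum_ib_i=0$.
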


\begin{proof}
Let $p_{j,N}$ be the orthonormal polynomials for $e^{-NU(x)}\,dx$, and set
\[
 P_N(u,v)=\sum_{j=0}^{N-1}p_{j,N}(u)p_{j,N}(v),\qquad
 k_N=e^{-NU(E)}P_N(E,E).
\]
Theorem~1.1, Remark~(iv), equation~(1.13), and equation~(6.1) of
Levin--Lubinsky~\cite{LL} apply with $Q=U/2$, $h=1$, and $\Sigma=\mathbb R$.
They give $k_N/N\to\varrho_U(E)$ and, locally uniformly for $a,b\in\mathbb C$,
\[
 \frac{P_N(E+a/k_N,E+b/k_N)}{P_N(E,E)}
 \exp\!\left\{-\frac{NU'(E)}{2k_N}(a+b)\right\}
 \longrightarrow \frac{\sin\pi(a-b)}{\pi(a-b)}.
\]
Thus the entire two-variable kernels
\[
 L_N(z,w)=\ell_N e^{-NU(E)}
 \exp\!\left\{-\frac{N\ell_NU'(E)}2(z+w)\right\}
 P_N(E+\ell_Nz,E+\ell_Nw)
\]
converge locally uniformly on $\mathbb C^2$ to
\[
 K(z,w)=\frac{\sin((z-w)/2)}{\pi(z-w)}.
\]
Here we used $\ell_Nk_N\to1/(2\pi)$.  The analytic exponential in $L_N$
is a linear correction of the external field.  It does not require
analytic continuation of $U$.

For $z,w\in\mathbb C^m$, initially with distinct coordinates, put
\[
 H_N(z,w)=\frac{\det[L_N(z_i,w_j)]_{i,j=1}^m}
                    {\Delta_m(z)\Delta_m(w)}.
\]
This quotient extends to an entire function.  More precisely, elementary
Newton interpolation row and column operations give
\[
 H_N(z,w)=\det\!\left[
 L_N[z_1,\ldots,z_i;w_1,\ldots,w_j]
 \right]_{i,j=1}^m,
\]
where the entries are divided differences in the two variables.  On fixed
compact sets, the latter admit the contour formula
\[
 L_N[z_1,\ldots,z_i;w_1,\ldots,w_j]
 =\frac1{(2\pi\mathrm i)^2}\oint\!\oint
 \frac{L_N(\zeta,\eta)\,d\zeta\,d\eta}
      {\prod_{r=1}^i(\zeta-z_r)\prod_{s=1}^j(\eta-w_s)},
\]
with two fixed contours enclosing all the nodes.  This formula proves that
$H_N\to H$ locally uniformly, including on all collision diagonals, where
$H$ is defined in the same way from $K$.  Cauchy's formula also gives
convergence of every fixed derivative of $H_N$.

To recover the actual weighted correlation, define for real $x$
\[
 \delta_N(x)=U(E+\ell_Nx)-U(E)-\ell_NU'(E)x,\qquad
 G_N(x_1,\ldots,x_m)=\exp\!\left\{-N\sum_{i=1}^m \delta_N(x_i)\right\}.
\]
The determinantal correlation formula gives the exact identity
\[
 \frac{\widehat\rho_N^{(m)}(x)}{\Delta_m(x)^2}
 =G_N(x)H_N(x,x).
\]
Since $U\in C^4$ and $\ell_N=O(N^{-1})$, $G_N\to1$ in $C^4$ on every
compact real set.  For example, $N\delta_N$ and its first derivative are
$O(N^{-1})$, while its derivatives of orders $2,3,4$ are respectively
$O(N^{-1}),O(N^{-2}),O(N^{-3})$.  Therefore
$G_N(x)H_N(x,x)\to H(x,x)$ in $C^4$ on compact real sets.

At $x=0$, the left-hand side is its full collision limit, so
$A_N(E)=H_N(0,0)$.  Positivity follows either from
\eqref{eq:univ-amplitude} or from the Gram determinant of the first $m$
polynomial derivatives.  The sine-kernel collision identity gives
$H(0,0)=C_2^{(m)}>0$.  Consequently
\[
 R_N(t;b)=\frac{G_N(tb)H_N(tb,tb)}{H_N(0,0)}
 \longrightarrow \frac{H(tb,tb)}{C_2^{(m)}}=R_{2,m}(t;b)
\]
in the asserted $C^4$ sense.  Symmetry in the coordinates makes the
linear coefficient proportional to $\sum_i b_i=0$; hence it vanishes
for every $N$, without a reflection-symmetry assumption on $U$.
The limiting coefficients now follow from Theorem~\ref{thm:main} at
$\beta=2$.  The $C^4$ convergence makes the fourth derivatives an
equicontinuous family near zero, which gives the uniform Taylor remainder.
Finally the convergent quadratic coefficients and a uniform bound on the
third derivatives imply the stated joint second-order expansion.
\end{proof}

\begin{remark}[Scope of the joint limit]
The last assertion uses the exact finite-$N$ amplitude $A_N(E)$.
Replacing it by $C_2^{(m)}$ in a second-order joint limit requires the
additional rate $A_N(E)/C_2^{(m)}-1=o(t_N^2)$.  Also, convergence of
$c_N,d_N,e_N$ by itself gives no joint fourth-order expansion with all
coefficients replaced by their limits: that would require, for example,
$c_N-\kappa_{2,m}=o(t_N^2)$ and $d_N=o(t_N)$.  These rate questions are
separate from the unconditional coefficient convergence proved above.
\end{remark}

\subsection{A conditional criterion for arbitrary beta}

We return to $\beta>0$, $m\beta>1$, and the $C^2$ assumptions of
Subsection~\ref{subsec:finite-general}. The following criterion requires convergence and bounds
for the singularly tilted environmental law. For arbitrary $\beta$,
these are additional hypotheses. Theorem~\ref{thm:unitary-universality}
proved the coefficient limits in the unitary class directly and did not
use this criterion.

\begin{theorem}[Conditional transfer of the quadratic coefficient]
\label{thm:univ-conditional-transfer}
Let $r_*(x)$ be the one-point intensity of
$\mathsf{HP}_{\beta,m\beta/2}$ in the normalization of
\eqref{eq:univ-unfolding}.  In addition to the assumptions above,
suppose that the following properties hold:
\begin{enumerate}
 \item For every compact $K\subset\mathbb R\setminus\{0\}$,
 \begin{equation}
   \int_K|r_N(x)-r_*(x)|\,dx\longrightarrow0.
   \label{eq:univ-local-convergence}
 \end{equation}
 \item There is a constant $C_0$, independent of $N$, such that
 \begin{equation}
   r_N(x)\leq C_0|x|^{m\beta}
   \quad\text{for almost every }0<|x|\leq1.
   \label{eq:univ-origin-bound}
 \end{equation}
 \item The inverse-square tails are uniformly negligible:
 \begin{equation}
   \lim_{R\to\infty}\sup_{N\geq m+1}
      \int_{|x|>R}\frac{r_N(x)}{x^2}\,dx=0.
   \label{eq:univ-tail-bound}
 \end{equation}
\end{enumerate}
Then
\begin{equation}
 \lim_{N\to\infty}c_N(b)
 =\frac{\beta^2\mathcal V(b)}
        {8(m\beta-1)(2m+1)}.
 \label{eq:univ-coefficient-limit}
\end{equation}
\end{theorem}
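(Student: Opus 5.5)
Starting from Proposition~\ref{prop:univ-finite-coefficient}, we have
\[
c_N(b)=\frac{\beta B(b)}2\,\mathbb E_{Q_{N,m,E}}S_{2,N}+\frac{\beta N\ell_N^2B(b)}4U''(E).
\]
The potential term is $O(N^{-1})$ because $\ell_N=O(N^{-1})$ and $U''(E)$ is fixed. It therefore suffices to prove
\[
\mathbb E_{Q_{N,m,E}}S_{2,N}=\int_{\mathbb R}\frac{r_N(x)}{x^2}\,dx\longrightarrow\int_{\mathbb R}\frac{r_*(x)}{x^2}\,dx=\mathbb E S_2,
\]
where $S_2$ is the inverse-square sum for $\mathsf{HP}_{\beta,m\beta/2}$ as in \eqref{eq:Sk}. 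The first equality is Campbell's formula for the one-point intensity of the unfolded points.

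Once this limit holds, the identification of the constant follows from Section~\ref{sec:ward}. With $\alpha=m\beta$ and $h=\beta/2$, \eqref{eq:ward-exact-second-moments} and Lemma~\ref{lem:ward-fourth-ui} give
\[
\mathbb E S_2=\frac{h\alpha}{4(\alpha-1)(\alpha+h)}.
\]
Lemma~\ref{lem:ward-fourth-ui} is stated under $\alpha>3$, but the second-order part needs only $\alpha>1$. Plugging in, $\alpha+h=\beta(2m+1)/2$, so $\mathbb E S_2=\beta m/(4(m\beta-1)(2m+1))$. Then, using $\mathcal V(b)=mB(b)$,
\[
\frac{\beta B}{2}\,\mathbb E S_2=\frac{\beta^2 mB}{8(m\beta-1)(2m+1)}=\frac{\beta^2\mathcal V(b)}{8(m\beta-1)(2m+1)},
\]
which is \eqref{eq:univ-coefficient-limit}.

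The identity $\int r_*/x^2=\mathbb E S_2$ is again Campbell's formula for the limiting process. The normalizations agree because $\mathsf{HP}$ is defined via the same unfolding convention \eqref{eq:univ-unfolding}, and the finiteness of $\mathbb E S_2$ comes from the Ward computation.

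For the convergence of the integrals, fix $0<\delta<1<R$ and split $\mathbb R$ into three regions.

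On $\{|x|\le\delta\}$, hypothesis (ii) gives
\[
\int_{|x|\le\delta} r_N x^{-2}\,dx\le 2C_0\int_0^\delta x^{m\beta-2}\,dx=\frac{2C_0\,\delta^{m\beta-1}}{m\beta-1},
\]
uniformly in $N$. This is small because $m\beta>1$.

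On $\{|x|>R\}$, hypothesis (iii) makes the contribution uniformly small in $N$.

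On the compact set $K=\{\delta\le|x|\le R\}$, which avoids the origin, $x^{-2}\le\delta^{-2}$. Hypothesis (i) therefore gives
\[
\int_K |r_N-r_*|\,x^{-2}\,dx\to0.
\]

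For the limit intensity $r_*$, the tail near the origin and at infinity must also be small. Since $\int r_*/x^2<\infty$, dominated convergence gives $\int_{|x|<\delta}+\int_{|x|>R}\to0$ as $\delta\to0$ and $R\to\infty$. Alternatively, Fatou applied to (i) transfers the bounds (ii) and (iii) to $r_*$ in the limit. Letting $N\to\infty$ and then $\delta\to0$, $R\to\infty$ completes the argument.

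The main obstacle is bookkeeping rather than analysis: checking that the normalization of $r_*$ in \eqref{eq:univ-unfolding} matches the normalization of $\mathsf{HP}_{\beta,m\beta/2}$ used in Section~\ref{sec:ward}, so that $\int r_*/x^2$ really equals $\mathbb E S_2$. One must also note that the Ward formula for $\mathbb E S_2$ holds for all $\alpha>1$. Its derivation uses only the two second-order Ward identities together with uniform integrability of $q_{2,N}$, and \eqref{eq:ward-trace-input} supplies the latter since $\alpha>1$.
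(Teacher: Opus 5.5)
Your proof is correct, and its skeleton matches the paper's. Both use Campbell's formula for $\mathbb E_{Q_{N,m,E}}S_{2,N}$, the split at $\delta$ and $R$ handled by (ii), (i) and (iii), finiteness of $\int r_*/x^2$ to control the limiting tails, and $N\ell_N^2=O(N^{-1})$ for the potential term.

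The one genuine difference is where the constant comes from. The paper imports $\int r_*(x)x^{-2}\,dx=m\beta/(4(m\beta-1)(2m+1))$ from \cite[Theorem 1.3]{FangSecond}, which is stated for all $m\beta>1$. You instead rederive it from the second-order Ward identities of Section~\ref{sec:ward}, whose standing hypothesis is $\alpha>3$. Your route keeps the argument internal to the paper, at the cost of checking that this hypothesis can be relaxed to $\alpha>1$; the citation avoids that check.

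The relaxation does go through:
\begin{enumerate}
 \item The integration by parts for $(r,F)=(1,p_1),(2,1)$ involves only polynomials of degree at most two. These are integrable against the majorant $\prod_i(1+u_i^2)^{-1-\alpha/2}$ as soon as $2<\alpha+1$.
 \item Only $a_N\to\mathbb E S_2$ has to pass to the limit, since $b_N$ is eliminated exactly at finite $N$.
 \item Uniform integrability of $q_{2,N}=f_N'(0)^2-f_N''(0)-1/(4N)$ in $L^{1+\eta}$ needs \eqref{eq:ward-trace-input} with $p=2+2\eta<\alpha+1$ \emph{and} \eqref{eq:ward-derivative-input} with $q=1+\eta<\alpha$.
\end{enumerate}
You cite only the first of these two inputs. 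The second is also needed, to control $f_N''(0)$, but it holds for every $\alpha>1$. So this is an omission in the write-up, not a gap in the argument.
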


\begin{proof}
By the intensity formula for nonnegative linear statistics,
\[
 \mathbb E_{Q_{N,m,E}}S_{2,N}
 =\int_{\mathbb R}\frac{r_N(x)}{x^2}\,dx.
\]
For $0<\eta<1$, the contribution of $|x|<\eta$ is bounded uniformly in
$N$ by
\begin{equation}
 \int_{|x|<\eta}\frac{r_N(x)}{x^2}\,dx
 \leq\frac{2C_0}{m\beta-1}\eta^{m\beta-1}.
 \label{eq:univ-small-cutoff}
\end{equation}
On $\eta\leq|x|\leq R$, convergence of the inverse-square integrals
follows from \eqref{eq:univ-local-convergence}.
The limiting intensity inherits the same local bound on annuli, hence
also satisfies \eqref{eq:univ-small-cutoff}.
The tails for $r_N$ are controlled by \eqref{eq:univ-tail-bound}.
The inverse-square moment identity of \cite[Theorem 1.3]{FangSecond}, valid throughout $m\beta>1$, is
\begin{equation}
 \int_{\mathbb R}\frac{r_*(x)}{x^2}\,dx
 =\frac{m\beta}{4(m\beta-1)(2m+1)};
 \label{eq:univ-HP-moment}
\end{equation}
in particular its tails vanish as well.
Taking first $N\to\infty$, then $\eta\downarrow0$ and $R\to\infty$,
proves convergence of the finite inverse-square moments to
\eqref{eq:univ-HP-moment}.
Finally $N\ell_N^2=O(N^{-1})$, so the external-potential contribution
in \eqref{eq:univ-finite-coefficient} vanishes.  Substituting
\eqref{eq:univ-HP-moment} there and using
$\mathcal V(b)=mB(b)$ proves \eqref{eq:univ-coefficient-limit}.
\end{proof}

Theorem~\ref{thm:univ-conditional-transfer} concerns the limit of finite-$N$
coefficients, equivalently the iterated limit with fusion taken first.
Its hypotheses alone supply neither a rate
in \eqref{eq:univ-coefficient-limit} nor a remainder uniform in $N$ in
\eqref{eq:univ-finite-expansion}.  Using this conditional criterion for a simultaneous limit
$\varepsilon=\varepsilon_N\to0$ requires additional uniform control of
that remainder. The unitary argument above supplies such control under
its stated assumptions.  For example, it would suffice to establish
\[
 \left|\frac{1-R_N(\varepsilon;b)}{\varepsilon^2}-c_N(b)\right|
 \leq\omega(|\varepsilon|),\qquad \omega(t)\longrightarrow0,
\]
uniformly for all sufficiently large $N$.

The use of the exact amplitude $A_N(E)$ is essential to the formulation.
Replacing it by the universal leading amplitude $C_\beta^{(m)}$ in a
simultaneous limit additionally requires
$A_N(E)/C_\beta^{(m)}-1=o(\varepsilon_N^2)$.
Centering the profile eliminates its first variation exactly;
an uncentered profile also samples the variation of the collision
amplitude as the common center moves.
For arbitrary $\beta$, proving local convergence
\eqref{eq:univ-local-convergence} together with both uniform bounds
\eqref{eq:univ-origin-bound}--\eqref{eq:univ-tail-bound} for general
potentials remains a separate universality problem.

\section{Further questions}
Higher even coefficients involve additional mixed inverse moments and a larger Ward system. The one-particle integrability scale suggests $m\beta>2k-1$ as a natural range for the $2k$th inverse-moment calculation. A complete higher-order theorem also requires an expectation--Taylor argument with compatible moment exponents and control of the resulting mixed terms.

At $m\beta=3$, identifying the coefficient of a possible $\varepsilon^4\log(1/|\varepsilon|)$ term requires a collision-scale argument. The interpolation method of \cite{FangCritical} supplies a possible starting point, but the pole of \eqref{eq:main-coefficient} alone is not a proof. Exceptional profiles and parameter values must be treated separately when the residue vanishes.

For the unitary class, Theorem~\ref{thm:unitary-universality} establishes fusion-coefficient universality for general $C^4$ confining potentials at regular bulk points, through fourth order. For arbitrary $\beta$, the conditional criterion isolates three tasks: proving local $L^1$ convergence of the fused one-point density away from zero, establishing a uniform near-origin bound, and controlling the inverse-square tails uniformly. All three are required to obtain an unconditional theorem by that route. A second-order joint limit also needs a remainder uniform in the particle number; the unitary theorem provides one with the exact finite-particle amplitude. A joint fourth-order expansion with universal lower-order coefficients requires additional convergence rates, as explained in Subsection~\ref{subsec:unitary-universality}.

\section*{AI-assisted preparation}
OpenAI ChatGPT assisted with exploratory calculations, symbolic checks, proof development, and drafting. This version is prepared for the author's review.

\end{document}